\newtheorem{prop}{Proposition}
\newtheorem{lemma}{Lemma}
\newtheorem{thm}{Theorem}
\newtheorem{example}{Example}
\theoremstyle{remark}
\newtheorem{remark}{Remark}
\newcommand{\F}{\mathcal F}
\newcommand{\E}{\mathbb E}
\newcommand{\R}{\mathbb R}
\renewcommand{\P}{\mathbb P}
\newcommand{\ind}{\mathds1}
\newcommand*{\abs}[1]{\left\lvert#1\right\rvert}
\newcommand*{\set}[1]{\left\{#1\right\}}
\newcommand{\lloc}{L^1_{\mathrm{loc}}}
\newcolumntype{Q}{>{$}l<{$}}
\begin{document}
\begin{frontmatter}

\title{Drift parameter estimation in stochastic differential equation
with multiplicative stochastic volatility}
%\author[]{\inits{}\fnm{}\snm{}\corref{cor1}}\email{}
%\cortext[cor1]{Corresponding author.}
%
%\author[]{\inits{}\fnm{}\snm{}}\email{}
%
%%\fnref{f1}
%%\fntext[]{Some remarks}
%
%\address[]{}
%\address[]{}

\author[a]{\inits{M.}\fnm{Meriem}\snm{Bel Hadj Khlifa}}\email
{meriem.bhk@outlook.fr}
\author[b]{\inits{Yu.}\fnm{Yuliya}\snm{Mishura}}\email{myus@univ.kiev.ua}
\author[b]{\inits{K.}\fnm{Kostiantyn}\snm{Ralchenko}\corref{cor1}}\email
{k.ralchenko@gmail.com}
\cortext[cor1]{Corresponding author.}
\author[a]{\inits{M.}\fnm{Mounir}\snm{Zili}}\email{Mounir.Zili@fsm.rnu.tn}

\address[a]{University of Monastir, Faculty of Sciences of Monastir,
Department of Mathematics, Avenue de l'Environnement, 5000, Monastir, Tunisia}
\address[b]{Department of Probability Theory,
Statistics and Actuarial Mathematics,\\
Taras Shevchenko National University of Kyiv,\\
64 Volodymyrska, 01601 Kyiv, Ukraine}

\markboth{M. Bel Hadj Khlifa et al.}{Drift parameter
estimation in stochastic differential equation with multiplicative
stochastic volatility}

\begin{abstract}
We consider a stochastic differential equation of the form
\[
dX_t = \theta a(t,X_t)\,dt + \sigma_1(t,X_t)
\sigma_2(t,Y_t)\,dW_t
\]
with multiplicative stochastic volatility, where $Y$ is some adapted
stochastic process.
We prove existence--uniqueness results for weak and strong solutions of
this equation under various conditions on the process $Y$ and the
coefficients $a$, $\sigma_1$, and $\sigma_2$.
Also, we study the strong consistency of the maximum likelihood
estimator for the unknown parameter $\theta$. We suppose that $Y$ is in
turn a solution of some diffusion SDE. Several examples of the main
equation and of the process $Y$ are provided supplying the strong consistency.
\end{abstract}

\begin{keyword}
Stochastic differential equation\sep weak and
strong solutions\sep stochastic volatility\sep
drift parameter estimation\sep maximum likelihood
estimator\sep strong consistency
\MSC[2010] 60H10\sep62F10\sep62F12
\end{keyword}

%\begin{keyword} . \sep.
%\MSC[2010] . \sep.
%\end{keyword}

%
\received{9 November 2016}% Updated by VTEXPTS2LaTeX.exe, 07.12.2016
%13:58
%
\revised{3 December 2016}% Updated by VTEXPTS2LaTeX.exe, 07.12.2016
%13:58
%
\accepted{4 December 2016}% Updated by VTEXPTS2LaTeX.exe, 07.12.2016
%13:58
\publishedonline{13 December 2016}
\end{frontmatter}

\section{Introduction}
The goal of the paper is to study the
stochastic differential equation (SDE), the diffusion coefficient of
which includes an additional stochastic process:
\begin{equation}
\label{eq:intro} dX_t = \theta a(t,X_t)\,dt +
\sigma(t,X_t, Y_t)\,dW_t,
\end{equation}
where $\sigma(t,x,y) = \sigma_1(t,x) \sigma_2(t,y)$, and to
estimate the drift parameter $\theta$ by the observations of stochastic
processes $X$ and $Y$.
Such equations often arise as models of a financial market in
mathematical finance.
For example, one of the first models of such a type with $\sigma(t,x,
y)= xy$ was proposed in \cite{hull}, where $Y$ was the square root of
the geometric Brownian motion process.
A similar model was considered by Heston \cite{hes}, where the
volatility was governed by the Ornstein--Uhlenbeck process.
Fouque et al.\ used the model with stochastic volatility driven by the
Cox--Ingersoll--Ross process; see \cite{fou1,fou2}.
The case where $\sigma(t,x, y)= x \sigma_2(y)$ and $Y$ is the
Ornstein--Uhlenbeck process was studied in \cite{kuc1,kuc2}.

In the present paper, we investigate the existence and uniqueness of
weak and strong solutions to the equation~\eqref{eq:intro}.
We adapt the approaches of Skorokhod~\cite{Skorokhod65}, Stroock and
Varadhan~\cite{stroock,stroock2}, and Krylov~\cite{kryy,KRY} to
establish the weak existence and weak uniqueness. Concerning the strong
existence and uniqueness, we use the well-known approaches of Yamada
and Watanabe~\cite{wat1} (see also \cite{Alta}) for inhomogeneous
coefficients and Lipschitz conditions. In the present paper, we
consider only the case of multiplicative stochastic volatility, where,
as it was mentioned, the diffusion coefficient is factorized as $\sigma
(t,x,y) = \sigma_1(t,x) \sigma_2(t,y)$.
Then we construct the maximum likelihood estimator for the unknown
drift parameter and prove its strong consistency.
As an example, we consider a linear model with stochastic volatility
driven by a solution to some It\^{o}'s SDE. In particular, we study in
details an SDE with constant coefficients, the Ornstein--Uhlenbeck
process, and the geometric Brownian motion, as the model for volatility
(note that process $Y$ can be interpreted not only as a volatility, but
also as an additional source of randomness).
Note that the maximum likelihood estimation in the Ornstein--Uhlenbeck
model with stochastic volatility was studied in \cite{ait-sahalia}.
Similar statistical methods for the case of deterministic volatility
can be found in \cite{hey,jan,liptser-stat2,mis}.

The paper is organized as follows. In Section~\ref{sec:2}, we prove
the existence of weak and strong solutions under different conditions. In
Section~\ref{sec:3}, we establish the strong consistency of the
maximum likelihood estimator of the unknown drift parameter
$\theta$. Section~\ref{sec:4} contains the illustrations of our results
with some
simulations. Auxiliary statements are gathered in Section~\ref{app}.

\section{Existence and uniqueness results for weak and strong solutions}
\label{sec:2}

Let $(\varOmega, \F,\overline{\F}, \P)$ be a complete probability
space with filtration $\overline{\F}=\set{\F_t, t\ge0}$
satisfying the standard assumptions. We assume that all
processes under consideration are adapted to the filtration
$\overline{\F}$.

\subsection{Existence of weak solution in terms of Skorokhod conditions}

Consider the following stochastic differential equation:
\begin{equation}
\label{eq.1.} dX_t = a(t,X_t) \,dt +
\sigma_1(t,X_t) \sigma_2(t,Y_t )\,
dW_t,
\end{equation}
where $X|_{t=0}=X_0\in\R$, $W$ is a Wiener process, and $Y$ is some
adapted stochastic process to be specified later.
\begin{thm}
Let $Y$ be a measurable and continuous process, $a$, $\sigma_1$,
and $\sigma_2$ be continuous w.r.t.\ $x\in\R$, $y\in
\R$, and $t\in[0,T]$, $\sigma_2$ be bounded, and
\[
\big|\sigma_1(t,x)\big|^2 +\big|a(t,x)\big|^2 \leq K
\bigl(1+\abs{x} ^2 \bigr)
\]
for some constant $K>0$.
Then Eq.~\eqref{eq.1.} has a weak solution.
\end{thm}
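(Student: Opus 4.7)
The plan is to follow the classical Skorokhod weak-existence scheme, adapted to the multiplicative stochastic volatility. First I would build a sequence of Euler-type approximations: fix a partition $0=t_0^n < t_1^n < \dots < t_n^n = T$ with mesh tending to $0$, set $\eta_n(s)=t_k^n$ for $s\in[t_k^n,t_{k+1}^n)$, and define
\[
X^n_t = X_0 + \int_0^t a\bigl(s,X^n_{\eta_n(s)}\bigr)\,ds + \int_0^t \sigma_1\bigl(s,X^n_{\eta_n(s)}\bigr)\sigma_2(s,Y_s)\,dW_s.
\]
Because the drift and diffusion coefficients are frozen at the left endpoint of each sub-interval and $\sigma_2(\cdot,Y)$ is a bounded adapted process, each $X^n$ is well-defined and adapted on $[0,T]$.

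Next, using the linear growth hypothesis on $|a|^2+|\sigma_1|^2$, the uniform bound on $\sigma_2$, the Burkholder--Davis--Gundy inequality and a Gronwall argument, I would derive the uniform moment estimate $\sup_n \E \sup_{t\le T}|X^n_t|^2 < \infty$ together with an increment bound of the form $\E|X^n_t-X^n_s|^4 \le C|t-s|^2$. Kolmogorov's tightness criterion then yields tightness of the laws of the triples $(X^n, Y, W)$ on $C([0,T];\R^3)$ (the marginals in $Y$ and $W$ are fixed). By Prokhorov's theorem and the Skorokhod representation theorem, along a subsequence one obtains a new probability space $(\tilde\varOmega,\tilde\F,\tilde\P)$ carrying processes $(\tilde X^n,\tilde Y^n,\tilde W^n)$ with the same joint laws as $(X^n,Y,W)$ and converging a.s.\ uniformly on $[0,T]$ to some continuous limit $(\tilde X,\tilde Y,\tilde W)$.

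The last step is to identify the limit as a weak solution. A standard Lévy-characterisation argument shows that $\tilde W$ is a Wiener process with respect to the filtration generated by $(\tilde X,\tilde Y,\tilde W)$. Continuity of $a$ combined with a.s.\ uniform convergence of $\tilde X^n$ handles the Lebesgue integral by dominated convergence (uniform integrability coming from the moment bounds, which are preserved by equality in law). The main obstacle is passing to the limit in the stochastic integral, i.e.\ showing that
\[
\int_0^t \sigma_1\bigl(s,\tilde X^n_{\eta_n(s)}\bigr)\sigma_2\bigl(s,\tilde Y^n_s\bigr)\,d\tilde W^n_s \longrightarrow \int_0^t \sigma_1(s,\tilde X_s)\sigma_2(s,\tilde Y_s)\,d\tilde W_s
\]
in probability. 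This relies on the joint continuity of $\sigma_1$ and $\sigma_2$, the a.s.\ uniform convergence of $\tilde X^n$ and $\tilde Y^n$, the boundedness of $\sigma_2$, and the uniform $L^2$ control of $\sigma_1(\cdot,\tilde X^n)$ coming from linear growth together with the uniform moment estimate; a standard truncation-plus-approximation argument then lets one transfer convergence of the integrands to convergence of the Itô integrals. Once this passage to the limit is secured, the Euler identity is preserved in the limit and $(\tilde X,\tilde W)$ satisfies \eqref{eq.1.} on the enlarged space with the stochastic volatility $\sigma_2(\cdot,\tilde Y)$, which is exactly a weak solution in Skorokhod's sense.
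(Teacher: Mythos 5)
Your proposal is correct and follows essentially the same route as the paper: both implement Skorokhod's weak-existence scheme via an Euler approximation, a change of probability space preserving the joint laws of $(X^n,Y,W)$, and a passage to the limit in the Lebesgue and It\^o integrals using the continuity of the coefficients, the boundedness of $\sigma_2$, and the linear growth of $a$ and $\sigma_1$. The only differences are technical: you obtain compactness via Kolmogorov's criterion and Prokhorov's theorem on $C([0,T];\R^3)$ with a.s.\ uniform convergence, whereas the paper uses Skorokhod's original conditions (uniform boundedness in probability plus stochastic equicontinuity of the discrete scheme, Lemmas~\ref{lem1}--\ref{lem 2}) together with his convergence result for stochastic integrals driven by converging martingales, which is precisely the lemma your ``truncation-plus-approximation argument'' would have to reprove.
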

\begin{proof} Consider a sequence of partitions of $[0,T]$: $0=t_0^n <t_1^n
<\cdots<t_n^n=T$ such that $\lim_{n\rightarrow\infty} \max_k
(t_{k+1}^n-t_k^n )=0$. Define $\xi_k^n$ by $\xi_0^n=X(0)$ and
\[
\xi_{k+1}^n=\xi_k^n+a
\bigl(t_k^n,\xi_k^n\bigr)\Delta
t_k^n+\sigma_1\bigl(t_k^n,
\xi_k^n\bigr) \sigma_2\bigl(t_k^n,Y
\bigl(t_k^n\bigr)\bigr)\Delta W_k^n.
\]
It follows from Lemma \ref{lem1}, Lemma \ref{lem 2}, and Proposition
\ref{prop} in Section~\ref{app} that it is possible to choose a
subsequence $n'$ and construct processes $\widetilde\xi_{n'}$,
$\widetilde W_{n'}$, and $\widetilde Y_{n'}$ such that the
finite-dimensional distributions of $\widetilde\xi_{n'}$, $\widetilde
W_{n'}$, and $\widetilde Y_{n'}$ coincide with those of $\xi^{n'}$,
$W$, and $Y$ and $\widetilde\xi_{n'}\rightarrow\widetilde\xi$,
$\widetilde W_{n'}\rightarrow\widetilde{W}$ and $\widetilde
Y_{n'}\rightarrow\widetilde{Y}$ in probability, where $\widetilde{\xi
}$, $\widetilde{W}$, and $\widetilde{Y}$ are some stochastic processes
(evidently, $\widetilde{W}$ is a Wiener process).
It suffices to prove that $\widetilde{\xi}$ is a solution of Eq.~(\ref
{eq.1.}) when $W$ and $Y$ are replaced by $\widetilde{W}$ and
$\widetilde{Y}$.

We have that $\widetilde{\xi}_{n'}$ satisfies the equation
\begin{align*}
\widetilde\xi_{n'}(t) &= \widetilde\xi_{n'}(0) + \sum
_{t_{k+1}^{n'}\leq
t} a \bigl(t_k^{n'},
\widetilde\xi_{n'} \bigl(t_k^{n'} \bigr) \bigr)
\Delta t_k^{n'}
\\
&\quad+\sum_{t_{k+1}^{n'}\leq t}\sigma_1
\bigl(t_k^{n'},\widetilde\xi_{n'}
\bigl(t_k^{n'} \bigr) \bigr) \sigma_2
\bigl(t_k^{n'},\widetilde{Y}_{n'}
\bigl(t_k^{n'} \bigr) \bigr) \Delta\widetilde{W}_k^{n'}.
\end{align*}
Since $\sigma_2$ is bounded and $\sigma_1$ is of linear growth, their
product is of linear growth:
\[
\big|\sigma_1(t,x)\sigma_2(t,y)\big| \leq K_1(1+
\abs{ x }),
\]
where $K_1>0 $ is a constant. Therefore,
\begin{align*}
&\P \Bigl(\sup_{0 \leq t\leq T}\big|\sigma_1 \bigl(t,
\widetilde{\xi }_{n'}(t) \bigr) \sigma_2 \bigl(t,
\widetilde{Y}_{n'}(t) \bigr)\big|> C \Bigr)
\\
&\quad\leq\P \Bigl(\sup_{0 \leq t\leq T} K_1 \bigl(1+\big|
\widetilde\xi _{n'}(t) \big| \bigr)>C \Bigr) =\P \biggl(\sup
_{0 \leq t\leq T} \big| \widetilde\xi_{n'}(t) \big|>
\frac
{C}{K_1}-1 \biggr)
\\
&\quad=\P \biggl(\sup_{0 \leq t\leq T} \big| \xi^{n'}(t) \big|>
\frac
{C}{K_1}-1 \biggr).
\end{align*}
Using Lemma \ref{lem1}, we get that
\[
\P \Bigl(\sup_{0 \leq
t\leq T}\big| \sigma_1 \bigl(t,
\widetilde\xi_{n'}(t) \bigr) \sigma_2 \bigl(t,\widetilde
Y_{n'}(t) \bigr)\big| >C \Bigr)\rightarrow0 \quad\text{as }C\rightarrow
\infty.\vadjust{\eject}
\]
Moreover, we have that
$\sigma_1(t,x)\sigma_2(t,y)$ is continuous w.r.t.\ $t \in[0,T]$, $x,y
\in\R$. Then, for any $\varepsilon>0$, there exists $\delta>0$ such that
\[
\big|\sigma_1(t_1,x_1)\sigma_2(t_1,y_1)-
\sigma_1(t_2,x_2)\sigma _2(t_2,y_2)\big|
< \varepsilon
\]
whenever
$\abs{t_1-t_2} < \delta$, $\abs{x_1-x_2} < \delta$, $\abs{y_1-y_2} <
\delta$.
Therefore,
\begin{align*}
\P& \bigl(\big| \sigma_1 \bigl(t_1,\widetilde
\xi_{n'}(t_1) \bigr) \sigma_2
\bigl(t_1,\widetilde{Y}_{n'}(t_1) \bigr)-
\sigma_1 \bigl(t_2,\widetilde\xi_{n'}(t_2)
\bigr) \sigma_2 \bigl(t_2,\widetilde{Y}_{n'}(t_2)
\bigr)\big| > \varepsilon \bigr)
\\
&\leq\P \bigl(\big| \widetilde\xi_{n'}(t_1)-\widetilde
\xi_{n'}(t_2)\big| < \delta, \big|\widetilde{Y}_{n'}(t_1)-
\widetilde{Y}_{n'}(t_2)\big| < \delta ,
\\
&\quad \big|\sigma_1 \bigl(t_1,\widetilde{
\xi}_{n'}(t_1) \bigr) \sigma_2
\bigl(t_1,\widetilde{Y}_{n'}(t_1) \bigr) -
\sigma_1 \bigl(t_2,\widetilde\xi_{n'}(t_2)
\bigr) \sigma_2 \bigl(t_2,\widetilde{Y}_{n'}(t_2)
\bigr)\big| > \varepsilon \bigr)
\\
&\quad+ \P \bigl(\big|\widetilde\xi_{n'}(t_1)-\widetilde
\xi_{n'}(t_2)\big| \geq\delta \bigr) +\P \bigl(\big|
\widetilde{Y}_{n'}(t_1)-\widetilde{Y}_{n'}(t_2)
\big| \geq \delta \bigr)
\\
&=\P \bigl(\abs{\xi_{n'}(t_1)-\xi_{n'}(t_2)}
\geq\delta \bigr) + \P \bigl(\abs{ Y(t_1)-Y(t_2)}\geq
\delta \bigr),
\end{align*}
and the last relation implies the following one:
\begin{align*}
\lim_{h\rightarrow0 }\lim_{n'\rightarrow\infty}\sup
_{\abs{ t_1-t_2}
\leq h} \P &\bigl(\big\lvert\sigma_1 \bigl(t_1,
\widetilde\xi_{n'}(t_1) \bigr) \sigma_2
\bigl(t_1,\widetilde{Y}_{n'}(t_1) \bigr)
\\
&- \sigma_1 \bigl(t_2,\widetilde\xi_{n'}(t_2)
\bigr) \sigma_2 \bigl(t_2,\widetilde{Y}_{n'}(t_2)
\bigr)\big\rvert > \varepsilon \bigr)=0.
\end{align*}
Applying Lemma \ref{lem1}, we get that
\begin{align*}
\sum_{t_{k+1}^{n'}\leq t}\sigma_1 \bigl(t_k^{n'},
\widetilde\xi_{n'} \bigl(t_k^{n'} \bigr) \bigr)
\sigma_2 \bigl(t_k^{n'},\widetilde{Y}_{n'}
\bigl(t_k^{n'} \bigr) \bigr) \Delta W_k^{n'}
\rightarrow\int_0^T \sigma_1
\bigl(s,\widetilde{\xi}(s) \bigr) \sigma_2 \bigl(s,\widetilde{Y}(s)
\bigr)\,d\widetilde{W}(s)
\end{align*}
in probability as
$n'\rightarrow\infty$, and we also have that
\[
\sum_{t_{k+1}^{n'}\leq t}a \bigl(t_k^{n'},
\widetilde\xi_{n'} \bigl(t_k^{n'} \bigr) \bigr)
\Delta t_k^{n'} \rightarrow\int_0^T
a \bigl(s,\widetilde{\xi}(s) \bigr)\,ds,
\]
whence the proof follows.
\end{proof}

\subsection{Existence and uniqueness of weak solution in terms of
Stroock--Varadhan conditions}

In this approach, we assume additionally that the process $Y$ also is a
solution of some diffusion stochastic differential equation. Let $W^1$
and $W^2$ be two Wiener processes, possibly correlated, so that
$dW^1_tdW^2_t=\rho dt $ for some $|\rho|\leq1$. In this case, we can
represent $W^2_t=\rho W^1_t+\sqrt{1-\rho^2}W^3_t$, where $W^3$ is a
Wiener process independent of $W^1$.

\begin{thm}
Consider the system of stochastic differential equations
\begin{numcases}{}
dX_t = a(t,X_t)\,dt + \sigma_1(t,X_t)\sigma_2(t, Y_t)\,dW_t^1,\label
{eq:1'}\\
\,dY_t = \alpha(t, Y_t)\,dt + \beta(t, Y_t)\,dW_t^2,\label{eq:1}
\end{numcases}
where all coefficients $a$, $\alpha$, $\sigma_1$, $\sigma_2$, and $\beta
$ are nonrandom measurable and
bounded functions, $\sigma_1$, $\sigma_2$, and $\beta$ are continuous
in all arguments.
Let $|\rho|<1$, and let $\sigma_1(t,x)>0$, $\sigma_2(t,y)>0$, $\beta
(t,y)>0$ for all $t,x,y$.
Then the weak existence and uniqueness in law hold for system \eqref
{eq:1'}--\eqref{eq:1}, and in particular, the weak existence and
uniqueness in law hold for Eq.~\eqref{eq:1'} with $Y$ being a weak
solution of Eq.~\eqref{eq:1}.\vadjust{\eject}
\end{thm}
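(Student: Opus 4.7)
The plan is to recast the coupled system as a genuine two-dimensional SDE driven by a two-dimensional Brownian motion and then invoke the Stroock--Varadhan well-posedness theorem for the associated martingale problem. Using the representation $W^2_t=\rho W^1_t+\sqrt{1-\rho^2}\,W^3_t$ with $W^3$ a Wiener process independent of $W^1$, I rewrite \eqref{eq:1'}--\eqref{eq:1} as
\[
dZ_t=b(t,Z_t)\,dt+M(t,Z_t)\,d\widetilde W_t,\qquad Z_t=(X_t,Y_t)^\top,\ \widetilde W_t=\bigl(W_t^1,W_t^3\bigr)^\top,
\]
where $b(t,x,y)=(a(t,x),\alpha(t,y))^\top$ and the matrix $M(t,x,y)$ has rows $(\sigma_1(t,x)\sigma_2(t,y),\,0)$ and $(\rho\beta(t,y),\,\sqrt{1-\rho^2}\,\beta(t,y))$. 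A direct computation shows that the diffusion matrix $A=MM^\top$ has entries $A_{11}=\sigma_1^2\sigma_2^2$, $A_{12}=A_{21}=\rho\sigma_1\sigma_2\beta$, $A_{22}=\beta^2$, and determinant $\det A=(1-\rho^2)\sigma_1^2\sigma_2^2\beta^2$.

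Next I would check each hypothesis of the Stroock--Varadhan theorem for bounded measurable drift and continuous, pointwise positive-definite diffusion. Boundedness and measurability of $b$ come from the assumptions on $a$ and $\alpha$. Continuity and boundedness of $A$ follow from the corresponding properties of $\sigma_1$, $\sigma_2$, and $\beta$. The assumption $|\rho|<1$ together with $\sigma_1(t,x)>0$, $\sigma_2(t,y)>0$, and $\beta(t,y)>0$ gives $\det A(t,x,y)>0$ everywhere, so $A$ is strictly positive definite at each point and hence, by continuity, uniformly positive definite on every compact subset of $[0,T]\times\R^2$.

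With these ingredients the Stroock--Varadhan well-posedness theorem (see~\cite{stroock,stroock2}) yields existence and uniqueness of the solution to the martingale problem for $(b,A)$ in $\R^2$, which is equivalent to weak existence and uniqueness in law for the two-dimensional SDE above. Since the pair $(X,Y)$ together with the independent pair $(W^1,W^3)$ recovers $W^2$ through the correlation relation, this translates directly into weak existence and uniqueness in law for the original coupled system \eqref{eq:1'}--\eqref{eq:1}. The final assertion of the theorem is then immediate: given a weak solution $Y$ of \eqref{eq:1}, joint uniqueness in law forces the marginal law of $X$ to be uniquely determined.

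The main obstacle, and the reason for the hypotheses $|\rho|<1$ and strict positivity of $\sigma_1,\sigma_2,\beta$, is verifying nondegeneracy of $A$: if $|\rho|=1$ then $A$ would be rank one and uniqueness in law with merely measurable drift would fail, while if any of $\sigma_1,\sigma_2,\beta$ were allowed to vanish, local uniform ellipticity would be lost and the Stroock--Varadhan argument would not apply. A smaller technical point is that the theorem is frequently stated under global uniform ellipticity whereas here ellipticity is only local; this is absorbed by the standard localization and stopping procedure built into the proof of well-posedness and requires no new input.
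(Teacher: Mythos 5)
Your proposal is correct and follows essentially the same route as the paper: the same recasting of the system as a two-dimensional SDE via $W^2=\rho W^1+\sqrt{1-\rho^2}\,W^3$, the same diffusion matrix, and the same appeal to the Stroock--Varadhan well-posedness theory. The only cosmetic difference is that you verify nondegeneracy through $\det(MM^\top)=(1-\rho^2)\sigma_1^2\sigma_2^2\beta^2>0$ while the paper checks the discriminant of the quadratic form $\lambda\mapsto\lVert B\lambda\rVert^2$; these are the same computation.
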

\begin{proof}
Equations (\ref{eq:1'}) and (\ref{eq:1}) are equivalent to the
two-dimensional stochastic differential equation
\[
dZ(t) = A(t,Z_t)\,dt + B(t,Z_t)\,dW(t),
\]
where $Z(t) =  (
\begin{smallmatrix}
X(t) \\
Y(t)
\end{smallmatrix}
 ) $, $\displaystyle W(t) =  (
\begin{smallmatrix}
W^1(t) \\
W^3(t)
\end{smallmatrix}
 ) $ is a two dimensional Wiener process,
\[
A(t,x,y) \,{=} %
\begin{pmatrix}
a(t,x) \\
\alpha(t,y)
\end{pmatrix} %
,\quad \mbox{and}\quad B(t,x,y)\,{=}
\begin{pmatrix}
\sigma_1(t,x) \sigma_2(t,y) & 0 \\
\rho\beta(t, y) & \sqrt{1-\rho^2}\beta(t,y)
\end{pmatrix} %
.
\]
It follows from the measurability and boundedness of $a$ and $\alpha$
and from the continuity and boundedness of $\sigma_1$, $\sigma_2$, and
$\beta$ that the coefficients of matrices $A$ and $B$ are
nonrandom, measurable, and bounded, and additionally the coefficients
of $B$ are continuous in all arguments.
Then we can apply Theorems 4.2 and 5.6 from \cite{stroock} (see also
Prop.~$1.14$ in \cite{chrn}) and deduce that we have to prove the
following relation: for any $(t,x,y)\in\R^+\times\R^2$, there exists
$\varepsilon(t,x,y)>0$ such that, for all $\lambda\in\R^2$,
\begin{equation}
\label{ineq1}\big\|B(t,x,y)\lambda\big\| \geq\varepsilon (t,x,y)\|\lambda\|.
\end{equation}
Relation~\eqref{ineq1} is equivalent to the following one (we omit arguments):
\[
\sigma_1^2\sigma_2^2
\lambda_1^2 +\beta^2 \bigl(\rho
\lambda_1+\sqrt{1-\rho ^2}\lambda_2
\bigr)^2 \ge\varepsilon^2 \bigl(\lambda_1^2+
\lambda_2^2 \bigr)
\]
or
\begin{equation}
\label{7} \bigl(\sigma_1^2\sigma_2^2+
\beta^2\rho^2 \bigr)\lambda_1^2 +
\beta^2 \bigl(1-\rho^2 \bigr)\lambda_2^2
+2\rho\sqrt{1-\rho^2}\beta^2\lambda_1
\lambda_2 \ge\varepsilon^2 \bigl(\lambda_1^2+
\lambda_2^2 \bigr).
\end{equation}
The quadratic form
\[
Q(\lambda_1,\lambda_2)= \bigl(\sigma_1^2
\sigma_2^2+\beta^2\rho^2 \bigr)
\lambda_1^2 +\beta^2 \bigl(1-
\rho^2 \bigr)\lambda_2^2 +2\rho\sqrt{1-
\rho^2}\beta^2\lambda_1\lambda_2
\]
in the left-hand side of \eqref{7} is positive definite since its discriminant
\[
D=\rho^2 \bigl(1-\rho^2 \bigr)\beta^4-
\beta^2 \bigl(1-\rho^2 \bigr) \bigl(\sigma_1^2
\sigma_2^2+\beta^2\rho^2 \bigr) =-
\beta^2 \bigl(1-\rho^2 \bigr) \sigma_1^2
\sigma_2^2<0.
\]
The continuity of $Q(\lambda_1,\lambda_2)$ implies the existence of
$\min_{\lambda_1^2+\lambda_2^2=1}Q(\lambda_1,\lambda_2)>0$.
Then, putting
$\varepsilon=\min_{\lambda_1^2+\lambda_2^2=1}Q(\lambda_1,\lambda_2)$
and using homogeneity, we get~\eqref{7}.
\end{proof}

\subsection{Existence of strong solution in terms of Yamada--Watanabe
conditions}
Now we consider strong existence--uniqueness conditions for Eq.~\eqref
{eq.1.}, adapting the Yamada--Watanabe conditions for inhomogeneous
coefficients from \cite{Alta}.
\begin{thm}
Let $a$, $\sigma_1$, and $\sigma_2$ be nonrandom measurable
bounded functions such that
\begin{enumerate}[\rm(i)]
\item There exists a positive increasing function $\rho(u)$, $u\in
(0,\infty)$, satisfying $\rho(0)=0$ such that
\[
\big| \sigma_1(t,x)-\sigma_1(t,y)\big|\leq\rho(\abs{x-y }),
\quad t \ge0,\ x, y \in\R,
\]
and
$\int_0^\infty\rho^{-2}(u)du=+\infty$.
\item There exists a positive increasing concave function $k(u)$,
$u\in(0,\infty)$, satisfying $k(0)=0$ such that\vadjust{\eject}
\[
\big|a(t,x)-a(t,y)\big|\leq k(\abs{ x-y }),\quad t \ge0,\ x, y \in{\mathbb R},
\]
and $\int_0^\infty k^{-1}(u)du=+\infty$. Also, let $Y$ be an adapted
continuous stochastic process.
Then the pathwise uniqueness of solution holds for Eq.~\eqref{eq.1.},
and hence it has a unique strong solution.
\end{enumerate}
\end{thm}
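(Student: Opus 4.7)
The plan is to execute the classical Yamada--Watanabe pathwise-uniqueness argument, with the only structural change being that the extra factor $\sigma_2(t,Y_t)$ in the diffusion coefficient is absorbed by its boundedness. Set $M = \sup_{t,y}|\sigma_2(t,y)|$, which is finite by assumption. Fix two solutions $X^{(1)}$, $X^{(2)}$ of \eqref{eq.1.} on the same stochastic basis, driven by the same $W$ and $Y$ and starting from the same $X_0$. For $Z_t = X^{(1)}_t - X^{(2)}_t$ one has
\begin{align*}
Z_t &= \int_0^t \bigl(a(s,X^{(1)}_s)-a(s,X^{(2)}_s)\bigr)\,ds \\
&\quad + \int_0^t \bigl(\sigma_1(s,X^{(1)}_s)-\sigma_1(s,X^{(2)}_s)\bigr)\sigma_2(s,Y_s)\,dW_s.
\end{align*}

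Next I would invoke the standard Yamada--Watanabe approximants of $|x|$: choose $1=a_0>a_1>\cdots\downarrow 0$ with $\int_{a_n}^{a_{n-1}} \rho^{-2}(u)\,du = n$, available because $\int_{0+}\rho^{-2}(u)\,du=+\infty$ by (i); pick a non-negative continuous $\psi_n$ supported in $(a_n,a_{n-1})$ with $0\le\psi_n(u)\le 2/(n\rho^2(u))$ and $\int_{a_n}^{a_{n-1}}\psi_n(u)\,du=1$; and set $\varphi_n(x)=\int_0^{|x|}\int_0^y \psi_n(u)\,du\,dy$. Then $\varphi_n\in C^2(\R)$, $|\varphi_n'|\le 1$, $\varphi_n''(x)=\psi_n(|x|)$, and $\varphi_n(x)\to|x|$ as $n\to\infty$.

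Applying Itô's formula to $\varphi_n(Z_t)$ and taking expectation (the stochastic integral is a genuine martingale because $\sigma_1$, $\sigma_2$ are bounded), one gets
\begin{align*}
\E\varphi_n(Z_t) &= \E\int_0^t \varphi_n'(Z_s)\bigl(a(s,X^{(1)}_s)-a(s,X^{(2)}_s)\bigr)\,ds \\
&\quad +\frac12 \E\int_0^t \psi_n(|Z_s|)\,\sigma_2^2(s,Y_s)\bigl(\sigma_1(s,X^{(1)}_s)-\sigma_1(s,X^{(2)}_s)\bigr)^2\,ds.
\end{align*}
The diffusion contribution is bounded by $\frac{M^2}{2}\E\int_0^t\psi_n(|Z_s|)\rho^2(|Z_s|)\,ds\le M^2 t/n\to 0$. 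Using $|\varphi_n'|\le 1$ together with (ii), Fubini, and Jensen's inequality applied to the concave $k$, the drift contribution is dominated by $\int_0^t k(\E|Z_s|)\,ds$. Passing to the limit via Fatou yields $\E|Z_t|\le \int_0^t k(\E|Z_s|)\,ds$; since $k(0)=0$ and the Osgood-type condition in (ii) holds, Bihari's lemma forces $\E|Z_t|=0$ for every $t$. This is pathwise uniqueness, and strong existence together with uniqueness then follow from the Yamada--Watanabe theorem, weak existence being produced along the lines of the Skorokhod-type construction of the first theorem of this section.

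The main obstacle is, in fact, quite modest: one must only check that the factor $\sigma_2(t,Y_s)$ does not spoil the standard Yamada--Watanabe estimate on the diffusion term. This is handled cleanly by the uniform bound $M$, which converts the diffusion integrand into $M^2\psi_n\rho^2\le 2M^2/n$. Note that $Y$ is assumed only adapted and continuous, not Markov; but since the entire argument uses $Y$ only through the pathwise values $\sigma_2(s,Y_s(\omega))$, which are common to the two solutions, this causes no difficulty.
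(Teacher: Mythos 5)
Your proposal is correct and follows essentially the same route as the paper: the same Yamada--Watanabe approximating functions $\varphi_n$, the same It\^o-formula decomposition with the diffusion term controlled by the uniform bound on $\sigma_2$ (your $M$ is the paper's $C$), Jensen's inequality for the concave $k$, and the Osgood/Bihari argument to conclude $\E|Z_t|=0$. The only addition is that you make explicit the final appeal to the Yamada--Watanabe theorem for strong existence, which the paper leaves implicit.
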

\begin{proof}
Let $1>a_1>a_2>\cdots>a_n>\cdots>0$ be defined by
\[
\int_{a_1}^1\rho^{-2}(u)\,du=1,\; \int
_{a_2}^{a_1}\rho^{-2}(u)\, du=2,\ldots,\int
_{a_n}^{a_{n-1}}\rho^{-2}(u)\,du=n,\ldots.
\]
We have that $a_n\rightarrow0$ as $n\rightarrow\infty$. Let $\phi
_n(u)$, $n=1,2,\ldots$, be a continuous function with support contained
in $(a_n,a_{n-1})$ such that $0\leq\phi_n(u) \leq\frac{ 2 \rho
^{-2}(u)}{n}$ and $\int_{a_n}^{a_{n-1}}\phi_n(u)\,du=1$. Such a
function obviously exists. Set
\[
\varphi_n(x)=\int_0^{\abs{x}}\int
_0^y \phi_n(u)\,du\,dy, \quad x\in {
\mathbb R}.
\]
Clearly, $\varphi_n\in C^2({\mathbb R})$, $\abs{\varphi_n'(x)} \leq1$,
and $\varphi_n(x)\nearrow\abs{x}$ as $n\rightarrow\infty$.

Let $X_1 $ and $X_2$ be two solutions of Eq.~(\ref{eq.1.}) on
the same probability space with the same Wiener process and such that
$X_1(0)=X_2(0) $. Then we can present their difference as
\begin{align*}
X_1(t)-X_2(t)&=\int_0^t
\sigma_2\bigl(s,Y(s)\bigr) \bigl(\sigma_1
\bigl(s,X_1(s)\bigr)-\sigma _1\bigl(s,X_2(s)
\bigr)\bigr)\,dW(s)
\\
&\quad+\int_0^t \bigl(a\bigl(s,X_1(s)
\bigr)-a\bigl(s,X_2(s)\bigr)\bigr)\,ds.
\end{align*}
By the It\^o formula,
\begin{align*}
&\varphi_n\bigl(X_1(t)-X_2(t)\bigr)
\\
&\quad=\int_0^t \varphi_n'
\bigl(X_1(s)-X_2(s)\bigr) \sigma_2
\bigl(s,Y(s)\bigr) \bigl(\sigma _1\bigl(s,X_1(s)\bigr)-
\sigma_1\bigl(s,X_2(s)\bigr)\bigr)\,dW(s)
\\
&\qquad+\int_0^t \varphi_n'
\bigl(X_1(s)-X_2(s)\bigr) \bigl(a\bigl(s,X_1(s)
\bigr)-a\bigl(s,X_2(s)\bigr)\bigr)\,ds
\\
&\qquad+\frac{1}{2} \int_0^t
\varphi_n''\bigl(X_1(s)-X_2(s)
\bigr) \sigma_2\bigl(s,Y(s)\bigr)^2\bigl(
\sigma_1\bigl(s,X_1(s)\bigr)-\sigma_1
\bigl(s,X_2(s)\bigr)\bigr)^2\,ds
\\
&\quad=J_1+J_2+J_3.
\end{align*}
We have that $\E(J_1)=0$,
\begin{align*}
\abs{\E(J_2)}&\leq\int_0^t\E
\abs{a\bigl(s,X_1(s)\bigr)-a\bigl(s,X_2(s)\bigr)}\,ds
\\
&\leq\int_0^t \E(k\bigl(\abs{X_1(s)-X_2(s)}
\bigr)\,ds \leq\int_0^t k(\E\bigl(
\abs{X_1(s)-X_2(s)}\bigr)\,ds
\end{align*}
by Jensen's inequality, and
\begin{align*}
\abs{\E(J_3)}&\leq\frac{C^2}{2}\int_0^t
\E \biggl(\frac{2}{n}\rho ^{-2}\bigl(\big|X_1(s)-X_2(s)\big|
\bigr)\rho^2\bigl(\big|X_1(s)-X_2(s)\big|\bigr)
\biggr)ds
\\
&\leq\frac{t}{n}\rightarrow0\quad\text{as }n\rightarrow\infty.
\end{align*}
So by letting $n\rightarrow\infty$ we get
\[
\E\bigl(\big|X_1(s)-X_2(s)\big|\bigr) \leq\int
_0^t k(\E\bigl(\big| X_1(s)-X_2(s)\big|
\bigr)\,ds.
\]
We have that $\int_0^\infty k^{-1}(u)du=+\infty$. Then we get $\E(\abs{
X_1(s)-X_2(s) })=0$, and hence $X_1(s)=X_2(s)$ a.s.
\end{proof}

\subsection{Existence and uniqueness for strong solution in terms of
Lipschitz conditions}

\begin{thm}\label{th:Lip} Let $a$, $\sigma_1 $, and
$\sigma_2$ be nonrandom measurable functions, and let $Y$ be an
adapted continuous stochastic process.
Consider the following assumptions:
\begin{enumerate}[\rm(i)]
\item
There exists $K>0$ such that, for all $t\geq0$ and $x\in\R$,
\[
\big| \sigma_1(t,x)\big| ^2 +\big| a(t,x)\big| ^2
\leq K^2 \bigl(1+\abs{ x }^2\bigr);
\]
\item
For any $n\in{\mathbb N}$, there exists $K_N>0$ such that, for all
$t\geq0$ and for all $(x,y)$ satisfying $\abs{x} \leq N$ and $\abs{y}
\leq N$,
\[
\big| a(t,x)-a(t,y)\big| + \big| \sigma_1(t,x)-\sigma_1(t,y)\big|
\leq K_N \abs { x-y };
\]
\item
$\sup_{s \geq0} \sup_{\abs{x}\leq N} \abs{ \sigma_2(s,x) }\leq C_N$.
\end{enumerate}
Then Eq.~\eqref{eq.1.} has a unique strong solution.
\end{thm}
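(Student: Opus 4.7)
The overall plan is a localization--truncation argument. Conditions (ii) and (iii) only provide local bounds on the coefficients, while (i) supplies the global linear growth needed to rule out explosion. The continuity of $Y$ turns the random factor $\sigma_2(t,Y_t)$ into a process bounded by a deterministic constant on a sequence of stochastic intervals that exhaust $[0,\infty)$, reducing the problem to the classical It\^o existence--uniqueness theorem for SDEs with random Lipschitz coefficients.

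First, using the continuity of $Y$, introduce the stopping times $\tau_N=\inf\set{t\ge 0:\abs{Y_t}\ge N}\wedge N$, which satisfy $\tau_N\to\infty$ a.s. By (iii), $\abs{\sigma_2(t,Y_t)}\le C_N$ for $t\le\tau_N$. Next, pick a Lipschitz cutoff $\psi_N:\R\to\R$ with $\psi_N(x)=x$ for $\abs{x}\le N$ and $\abs{\psi_N(x)}\le N$ everywhere, and set $a_N(t,x)=a(t,\psi_N(x))$, $\sigma_{1,N}(t,x)=\sigma_1(t,\psi_N(x))$. By (ii) the truncated coefficients are globally Lipschitz in $x$ with constant $K_N$, and by (i) they inherit the linear growth bound with the same $K$.

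Second, consider for each $N$ the auxiliary SDE
\[
dX^N_t = a_N(t,X^N_t)\,dt + \sigma_{1,N}(t,X^N_t)\,\sigma_2(t,Y_t)\,\ind_{\set{t\le \tau_N}}\,dW_t,\qquad X^N_0=X_0.
\]
Its coefficients, viewed as random functions of $x$, are progressively measurable, globally Lipschitz in $x$ with the deterministic constant $K_N(1+C_N)$, and of linear growth. The classical It\^o theorem for SDEs with random Lipschitz coefficients then yields a unique strong solution $X^N$ on $[0,T]$ for every $T>0$; pathwise uniqueness follows from the standard Gronwall--BDG estimate applied to $\E\sup_{s\le t}\abs{X^N_s-\widetilde X^N_s}^2$. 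Now set $\zeta_N=\inf\set{t:\abs{X^N_t}\ge N}$ and $\sigma_N=\tau_N\wedge\zeta_N$. On $[0,\sigma_N]$ the truncations are inactive, so both $X^N$ and $X^{N+1}$ solve Eq.~\eqref{eq.1.}; by pathwise uniqueness they coincide, and we may consistently define $X_t:=X^N_t$ for $t\le\sigma_N$. The linear growth condition (i), combined with $\abs{\sigma_2(t,Y_t)}\le C_N$ for $t\le\tau_N$, gives via BDG and Gronwall a finite bound for $\E\sup_{s\le T\wedge\tau_N}\abs{X^N_s}^2$, whence $\P(\zeta_N\le T\wedge\tau_N)\to 0$ as $N\to\infty$. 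Since $\tau_N\to\infty$ a.s., we conclude $\sigma_N\to\infty$ a.s., so $X$ is defined on $[0,\infty)$. Global uniqueness is immediate: any two strong solutions agree on each $[0,\sigma_N]$.

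The main obstacle is that $\sigma_2(t,Y_t)$ is an arbitrary adapted continuous process, neither Lipschitz in $t$ nor bounded uniformly in $\omega$, so standard deterministic-coefficient SDE theorems cannot be invoked directly. The decisive point is that (iii) together with the continuity of $Y$ converts $\sigma_2(\cdot,Y_\cdot)$ into a process bounded by a \emph{deterministic} constant on $[0,\tau_N]$; this is precisely what makes the localized coefficients globally Lipschitz with a deterministic constant, enabling the classical strong-existence and pathwise-uniqueness machinery on each stochastic interval $[0,\sigma_N]$.
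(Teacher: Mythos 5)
Your overall strategy is sound and, in effect, supplies the details behind the paper's one-line proof: the paper simply invokes the successive-approximation (Picard) theorem for SDEs with progressively measurable, globally Lipschitz random coefficients (Nisio, Thm.~1.2), and your truncation of $a,\sigma_1$ together with the stopping of $\sigma_2(\cdot,Y_\cdot)$ is exactly the localization needed to place the equation in that framework. The construction of the localized solutions, their consistency on the stochastic intervals, and the local pathwise uniqueness are all fine.

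There is, however, a genuine gap in the non-explosion step. You use a single index $N$ for both localizations, so the a priori bound on $\E\sup_{s\le T\wedge\tau_N}\abs{X^N_s}^2$ obtained from BDG and Gronwall necessarily depends on $C_N$ (the Gronwall constant is of order $e^{c(1+C_N^2)T}$), and condition (iii) puts no restriction on how fast $C_N$ may grow (take, e.g., $\sigma_2(s,x)=e^{e^{\abs{x}}}$, which is locally bounded). Consequently the Chebyshev estimate $\P(\zeta_N\le T\wedge\tau_N)\le N^{-2}\,\E\sup_{s\le T\wedge\tau_N}\abs{X^N_s}^2$ need not tend to zero, and the claim ``whence $\P(\zeta_N\le T\wedge\tau_N)\to 0$'' does not follow as written. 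The standard repair is to decouple the two truncation levels: fix $M$, stop $\sigma_2(\cdot,Y_\cdot)$ at $\tau_M$ so that the diffusion coefficient has linear growth constant $K(1+C_M)$ uniformly in the $x$-truncation level $N$, deduce $\P(\zeta_{N,M}\le T)\to 0$ as $N\to\infty$ for fixed $M$, and only then let $M\to\infty$ using $\tau_M\to\infty$ a.s. With this two-parameter localization the rest of your argument goes through.
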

This result can be proved by using the successive approximation method;
see, for example, \cite[Thm.~1.2]{Nisio15}.

\section{Drift parameter estimation}
\label{sec:3}

\subsection{General results}
Let $(\varOmega, \F,\overline{\F}, \P)$ be a complete probability
space with filtration $\overline{\F}=\set{\F_t, t \ge0}$
satisfying the standard assumptions. We assume that all
processes under consideration are adapted to the filtration
$\overline{\F}$.
Consider a parameterized version of Eq.~\eqref{eq.1.}
\begin{equation}
\label{eq:sde} dX_t =\theta a(t,X_t)\,dt +
\sigma_1(t,X_t) \sigma_2(t,Y_t)
\,dW_t,
\end{equation}
where $W$ is a Wiener process.
Assume that Eq.~\eqref{eq:sde} has a unique strong
solution $X=\{X_t,t\in[0,T]\}$. Our main problem is to estimate
the unknown parameter $\theta$ by continuous observations of $X$ and $Y$.

Denote
\[
f(t,x,y)=\frac{a(t,x)}{\sigma_1^2(t,x)\sigma_2^2(t,y)},\qquad g(t,x,y)=\frac{a(t,x)}{\sigma_1(t,x)\sigma_2(t,y)}.
\]
Assume that, for all $t>0$,
\begin{gather}
\sigma_1(t,X_t)\sigma_2(t,Y_t)
\ne0 \quad\text{a.s.},\label{eq:ne0}
\\
\int_0^t g^2(s,X_s,Y_s)
\,ds<\infty \quad\text{a.s.},\label{eq:sq-int}
\\
\int_0^\infty g^2(s,X_s,Y_s)
\,ds=\infty \quad\text{a.s.}\label{eq:cond}
\end{gather}
Then a likelihood function for Eq.~\eqref{eq:sde} has the form
\[
\frac{dP_{\theta}(T)}{dP_{0}(T)}= \exp\set{\theta\int_0^Tf(t,X_t,Y_t)
\,dX_t -\frac{\theta^{2}}{2}\int_0^Tg^2(t,X_t,Y_t)
\,dt};
\]
see~\cite[Ch.~7]{liptser-stat1}. Hence, the maximum likelihood
estimator of parameter $\theta$ constructed by observations of $X$ and
$Y$ on the interval $[0,T]$ has the form
\begin{equation}
\label{eq:mle} \hat{\theta}_{T}=\frac{\int_0^Tf(t,X_t,Y_t)\,dX_t}%
{\int_0^Tg^2(t,X_t,Y_t)\,dt} =\theta+
\frac{\int_0^Tg(t,X_t,Y_t)\,dW_t}%
{\int_0^Tg^2(t,X_t,Y_t)\,dt}.
\end{equation}

\begin{thm}\label{th:gen}
Under assumptions~\eqref{eq:ne0}--\eqref{eq:cond}, the estimator $\hat
{\theta}_{T}$ is strongly consistent as
$T\to\infty$.
\end{thm}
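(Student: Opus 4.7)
The plan is to exploit the representation in \eqref{eq:mle}, which reduces the problem to showing that a ratio of a continuous local martingale to its quadratic variation tends to zero almost surely. Specifically, from \eqref{eq:mle} we have
\[
\hat\theta_T - \theta = \frac{M_T}{\langle M\rangle_T},
\]
where
\[
M_T := \int_0^T g(t,X_t,Y_t)\,dW_t, \qquad \langle M\rangle_T = \int_0^T g^2(t,X_t,Y_t)\,dt.
\]
First I would verify that $M$ is a well-defined continuous local martingale: condition \eqref{eq:sq-int} ensures that the integrand $g(\cdot,X_\cdot,Y_\cdot)$ is a.s.\ in $L^2_{\mathrm{loc}}([0,T])$ for every $T$, so the stochastic integral is well-defined and continuous in $T$, and $\langle M\rangle_T$ coincides with the expression above.

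Next I would invoke the strong law of large numbers for continuous local martingales (see, e.g., Liptser--Shiryaev \cite{liptser-stat1}, or the standard result that for a continuous local martingale $M$ with $\langle M\rangle_\infty=\infty$ one has $M_T/\langle M\rangle_T \to 0$ a.s.\ as $T\to\infty$). Assumption~\eqref{eq:cond} provides precisely the divergence $\langle M\rangle_T \to \infty$ a.s. needed to apply this theorem, and condition \eqref{eq:ne0} ensures that $g$ is well-defined along the trajectories of $(X,Y)$ (so that the denominator in \eqref{eq:mle} is finite and strictly positive on a set of full measure for all sufficiently large $T$). Combining these ingredients yields $M_T/\langle M\rangle_T \to 0$ a.s., whence $\hat\theta_T \to \theta$ a.s.

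There is essentially no serious obstacle here; the nontrivial content is hidden in the hypothesis \eqref{eq:cond}, which is what forces $\langle M\rangle_\infty = \infty$. The only point requiring a moment of care is that $g(t,X_t,Y_t)$ is an adapted process (which follows from the adaptedness of $X$ and $Y$ and the measurability of $a$, $\sigma_1$, $\sigma_2$) so that the Itô integral defining $M_T$ makes sense with respect to the underlying filtration $\overline{\F}$; once this is noted, the argument is a direct citation of the SLLN for continuous (local) martingales.
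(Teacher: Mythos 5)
Your argument is correct and is essentially identical to the paper's proof: both reduce via the representation \eqref{eq:mle} to showing $M_T/\langle M\rangle_T\to0$ a.s., note that \eqref{eq:sq-int} makes $M$ a well-defined (local) martingale with quadratic variation $\int_0^T g^2(t,X_t,Y_t)\,dt$, and apply the strong law of large numbers for martingales using the divergence supplied by \eqref{eq:cond}. The only cosmetic difference is the reference cited for the martingale SLLN (the paper uses Liptser--Shiryaev, \emph{Theory of Martingales}).
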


\begin{proof}
Note that, under condition~\eqref{eq:sq-int} the process
$M_t=\int_0^tg(s,X_s,Y_s)\,dW_s$
is a square-integrable local martingale with quadratic
variation
$\langle
M\rangle_t=\break\int_0^tg^2(s,X_s,Y_s)\,ds$.
According to the strong law of large numbers for martingales
\cite[Ch.~2, \S~6, Thm.~10, Cor.~1]{liptser-shiryaev1}, under the condition
$\langle M\rangle_{T}\rightarrow\infty$ a.s.\ as $T\to\infty$,
we have that $\frac{M_{T}}{\langle M\rangle_{T}}\rightarrow 0$
a.s.\ as $T\to\infty$. Therefore, it follows from
representation~\eqref{eq:mle} that $\hat{\theta}_T$ is strongly
consistent.
\end{proof}
%
%\begin{remark}
%For example, if the function $\sigma_1(t,x)\sigma_2(t,y)$ is separated
%from zero...
%\end{remark}

\subsection{Linear equation with stochastic volatility}
As an example, let us consider the model
\begin{equation}
\label{eq:ex1-sde} dX_t =\theta X_t\,dt + X_t
\sigma_2(Y_t)\,dW_t, \quad
X_0=x_0\in\R,
\end{equation}
where $W_t$ is a Wiener process, and $Y_t$ is a continuous stochastic
process with values from an open interval $J=(l,r)$ (further, in
examples, we will consider $J=\R$ or $J=(0,+\infty)$).
By Theorem~\ref{th:Lip}, under the assumption
\begin{enumerate}[({A}1)]
\item\label{A0} $\sigma_2(y)$ is locally bounded on $J$,
\end{enumerate}
there exists a unique strong solution of \eqref{eq:ex1-sde}.\vadjust{\eject}

Let $Y$ be a $J$-valued solution of the equation
\begin{equation}
\label{eq:ex1-Y} dY_t =\alpha(Y_t)\,dt +
\beta(Y_t)\,dW_t^1, \quad Y_0=y_0
\in J,
\end{equation}
where $W^1$ is a Wiener process, possibly correlated with $W$.

By $\lloc(J)$ we denote the set of Borel functions $J\to[-\infty,\infty
]$ that are locally integrable on $J$, that is, integrable on compact
subsets of $J$.
By $\lloc(l+)$ we denote the set of Borel functions
$f\colon J\to[-\infty,\infty]$ such that
$\int_l^z\abs{f(y)}\,dy<\infty$
for some $z\in J$.
The notation $\lloc(r-)$ is introduced similarly.

Assume that coefficients $\alpha$ and $\beta$ satisfy the
Engelbert--Schmidt conditions
\begin{enumerate}[({A}1)]
\setcounter{enumi}{1}
\item\label{A1} $\beta(y)\ne0$ for all $y\in J$, and
\item\label{A2} $\beta^{-2},\alpha\beta^{-2}\in\lloc(J)$.
\end{enumerate}

Let us introduce the following notation:
\begin{align*}
\rho(y)&=\exp\set{-2\int_c^y\frac{\alpha(u)}{\beta^2(u)}
\,du}, \quad y\in J,
\\
s(y)&=\int_c^y\rho(u)\,du, \quad y\in\bar
J=[l,r],
\end{align*}
for some $c\in J$.
Assume additionally that
\begin{enumerate}[({A}1)]
\setcounter{enumi}{3}
\item\label{A3} $s(r)=\infty$ or $\frac{s(r)-s}{\rho\beta^2}\notin\lloc(r-)$,
\item\label{A4} $s(l)=-\infty$ or $\frac{s-s(l)}{\rho\beta^2}\notin\lloc(l+)$.
\end{enumerate}

Under (A\ref{A1})--(A\ref{A2}), the SDE \eqref{eq:ex1-Y} has a weak
solution, unique in law, which possibly exits $J$ at some time $\zeta$.
Moreover, $\zeta=\infty$ a.s.\ if and only if conditions (A\ref
{A3})--(A\ref{A4}) are satisfied,\; see, for example, \cite[Prop.~2.6]{miur}.

Assume also that
\begin{enumerate}[({A}1)]
\setcounter{enumi}{5}
\item\label{A5} $\beta^{-2}\sigma_2^{-2}\in\lloc(J)$,
\item\label{A6} one of the following four conditions holds:
\begin{enumerate}[(i)]
\item
$s(r)=\infty$, $s(l)=-\infty$,
\item
$s(r)<\infty$, $s(l)=-\infty$, $\frac{s(r)-s}{\rho\beta^2\sigma
_2^2}\notin\lloc(r-)$,
\item
$s(r)=\infty$, $s(l)>-\infty$,
$\frac{s-s(l)}{\rho\beta^2\sigma_2^2}\notin\lloc(l+)$,
\item
$s(r)<\infty$, $s(l)>-\infty$, $\frac{s(r)-s}{\rho\beta^2\sigma
_2^2}\notin\lloc(r-)$,
$\frac{s-s(l)}{\rho\beta^2\sigma_2^2}\notin\lloc(l+)$,
\end{enumerate}
\item\label{A7} $X_t\sigma(Y_t)\ne0$ a.s., $t\ge0$.
\end{enumerate}

The maximum likelihood estimator~\eqref{eq:mle} for model \eqref
{eq:ex1-sde} equals
\begin{equation}
\label{eq:ex1-est} \hat{\theta}_T=\frac{\int_0^TX_t^{-1}\sigma_2^{-2}(Y_t)\,dX_t}%
{\int_0^T\sigma_2^{-2}(Y_t)\,dt}.
\end{equation}

\begin{thm}\label{th:ex1}
Under assumptions \textup{(A\ref{A0})--(A\ref{A7})}, the
estimator $\hat{\theta}_T$ is strongly consistent as $T\to\infty$.
\end{thm}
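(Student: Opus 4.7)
The plan is to reduce Theorem~\ref{th:ex1} to the general Theorem~\ref{th:gen} by verifying conditions \eqref{eq:ne0}--\eqref{eq:cond} for the linear model \eqref{eq:ex1-sde}. Here $a(t,x)=x$, $\sigma_1(t,x)=x$, $\sigma_2(t,y)=\sigma_2(y)$, so
\[
f(t,x,y) = \frac{1}{x\sigma_2^2(y)}, \qquad g(t,x,y) = \frac{1}{\sigma_2(y)},
\]
and \eqref{eq:ne0} is exactly assumption (A\ref{A7}). Under (A\ref{A1})--(A\ref{A4}), Proposition~2.6 of \cite{miur} yields a weak solution $Y$ of \eqref{eq:ex1-Y} that is unique in law and stays in $J$ for every $t\ge 0$, so the additive functional $I_T := \int_0^T \sigma_2^{-2}(Y_t)\,dt$ is well defined for every $T\in[0,\infty]$, and \eqref{eq:sq-int}--\eqref{eq:cond} reduce to the two statements $I_T<\infty$ a.s.\ for finite $T$ and $I_\infty=\infty$ a.s.

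For \eqref{eq:sq-int} the natural tool is the occupation times formula applied to the semimartingale $Y$: for any Borel $h\ge 0$,
\[
\int_0^T h(Y_t)\,dt = \int_J h(y)\,L_T^y(Y)\,\frac{dy}{\beta^2(y)},
\]
where $L_T^y$ is the local time of $Y$ at level $y$. Because a.s.\ the map $y\mapsto L_T^y(Y)$ is bounded with compact support in $J$ at every finite $T$, one obtains $I_T<\infty$ a.s.\ as soon as $\beta^{-2}\sigma_2^{-2}\in\lloc(J)$, which is precisely (A\ref{A5}).

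For \eqref{eq:cond} the divergence of $I_\infty$ is controlled by the boundary behaviour of $Y$. Passing to the scale function $s$ and the speed measure $m(dy)=dy/(\rho(y)\beta^2(y))$, the question reduces to the classical diffusion criterion that an additive functional $\int_0^\infty h(Y_t)\,dt$ diverges a.s.\ iff, at each finite-scale endpoint $e\in\set{l,r}$, the weighted integral of $h/\beta^2$ against $|s(e)-s(\cdot)|$ near $e$ is infinite, while at an endpoint with $|s(e)|=\infty$ no additional condition is needed. Setting $h=\sigma_2^{-2}$ converts this dichotomy into exactly the four subcases of (A\ref{A6}); the required statements lie within the same framework of \cite{miur}. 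Combining the three verifications, Theorem~\ref{th:gen} delivers strong consistency of $\hat{\theta}_T$.

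The main obstacle is the bookkeeping in the scale/speed argument: one must separately handle the cases in which each boundary is natural, entrance, exit, or regular, check that (A\ref{A5}) is the correct analytic transcription of the occupation-time finiteness criterion, and verify that each of the four conditions in (A\ref{A6}) rules out fast enough escape of $Y$ toward the corresponding boundary that would otherwise make $I_\infty$ finite. Once this boundary classification is matched to the hypotheses, the reduction to Theorem~\ref{th:gen} is immediate.
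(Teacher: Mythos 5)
Your proposal is correct and follows essentially the same route as the paper: reduce to Theorem~\ref{th:gen}, identify \eqref{eq:ne0} with (A\ref{A7}), and settle \eqref{eq:sq-int} and \eqref{eq:cond} via the Mijatovi\'c--Urusov criteria for integral functionals of one-dimensional diffusions (your occupation-time/local-time argument for $I_T<\infty$ is just an inline sketch of their Theorem~2.7, and your scale/speed dichotomy at the boundaries is exactly what the paper extracts from their Proposition~2.4 and Theorems~2.11--2.12). No substantive difference.
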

\begin{proof}
We need to verify conditions~\eqref{eq:ne0}--\eqref{eq:cond} of
Theorem~\ref{th:gen}.
For model~\eqref{eq:ex1-sde}, they read as follows:
\begin{align}
X_t\sigma_2(Y_t)&\ne0, \quad t\ge0,\quad
\text{a.s.},\label{eq:ex1-ne0}
\\
\int_0^t\sigma_2^{-2}(Y_s)
\,ds&<\infty, \quad t>0,\quad\text{a.s.}, \label{eq:ex1-sq-int}
\\
\int_0^\infty\sigma_2^{-2}(Y_s)
\,ds&=\infty \quad\text{a.s.}\label{eq:ex1-cond}
\end{align}
Note that~\eqref{eq:ex1-ne0} is assumption~(A\ref{A7}).
By~\cite[Thm.~2.7]{miur} the local integrability condition~(A\ref{A5}),
together with (A\ref{A1})--(A\ref{A4}), implies \eqref{eq:ex1-sq-int}.
Further, if assumption~(A\ref{A6})(i) holds, then \eqref{eq:ex1-cond}
is satisfied by \cite[Thm. 2.11]{miur}. In the remaining case
$s(r)<\infty$ or $s(l)>-\infty$, we have that
\[
\varOmega=\set{\lim_{t\uparrow\infty} Y_t=r}\cup\set{\lim
_{t\uparrow\infty
} Y_t=l};
\]
see \cite{miur}.
Moreover, if $s(r)=\infty$, then
$\P (\lim_{t\uparrow\infty} Y_t=r )=0$ by \cite[Prop. 2.4]{miur}.
If $s(r)<\infty$ and
$\frac{s(r)-s}{\rho\beta^2\sigma_2^2}\notin\lloc(r-)$,
then
$\int_0^\infty\sigma_2^{-2}(Y_s)\,ds=\infty$
a.s.\ on $\{\lim_{t\uparrow\infty} Y_t=r\}$
by \cite[Thm. 2.12]{miur}.
The similar statements hold for $\{\lim_{t\uparrow\infty}
Y_t=l\}$. This implies that \eqref{eq:ex1-cond} is satisfied under each
of conditions (ii)--(iv) of assumption (A\ref{A6}).
\end{proof}

Now we consider several examples of the process $Y$, namely the
Bachelier model, the Ornstein--Uhlenbeck model, the geometric Brownian
motion, and the\break Cox--Ingersoll--Ross model.
We concentrate on verification of assumption (A\ref{A6}) for these
models, assuming that other conditions of Theorem~\ref{th:ex1} are satisfied.
%The assumptions (A\ref{A1})--(A\ref{A2}) for them obviously are
%satisfied. The assumptions (A\ref{A3})--(A\ref{A4}) also are known,
%however, they can be checked similarly to (A\ref{A6}) by putting $
%\sigma_2=1$.
%In what follows we assume that assumptions (A\ref{A0}) and (A\ref{A7})
%are satisfied.

\begin{example}[Bachelier model]
Let $Y$ be a solution of the SDE
\[
dY_t =\alpha\,dt + \beta\,dW_t^1, \quad
Y_0=y_0\in\R,
\]
where $\alpha\in\R$ and $\beta\ne0$ are some constants.
Assume that $\sigma_2^{-2}(y)\in\lloc(\R)$ and one of the following
assumptions holds:
\begin{enumerate}[\rm(i)]
\item$\alpha=0$,
\item$\alpha>0$ and $\sigma_2^{-2}(y)\notin\lloc(+\infty)$,
\item$\alpha<0$ and $\sigma_2^{-2}(y)\notin\lloc(-\infty)$.
\end{enumerate}
Then estimator~\eqref{eq:ex1-est} is strongly consistent.
\end{example}
Indeed, in this case, $J=\R$,
\[
\rho(y)=\exp\set{-\frac{2\alpha}{\beta^2}\,y}, \quad\text{and}\quad s(y)=\int
_0^y\exp\set{-\frac{2\alpha}{\beta^2}\,u}\,du.
\]
If $\alpha=0$, then $s(y)=y$,
$s(+\infty)=\infty$, $s(-\infty)=-\infty$, and assumption (A\ref
{A6})(i) is satisfied.
Otherwise, we have\vadjust{\eject}
\[
s(y)=\frac{\beta^2}{2\alpha} \biggl(1-\exp\set{-\frac{2\alpha}{\beta^2}\, y} \biggr).
\]
If $\alpha>0$, then $s(+\infty)=\frac{\beta^2}{2\alpha}$, $s(-\infty
)=-\infty$, and
\[
\frac{s(+\infty)-s(y)}{\rho(y)\beta^2\sigma_2^2(y)} =\frac{1}{2\alpha\sigma_2^2(y)}\notin\lloc(+\infty),
\]
and hence (A\ref{A6})(ii) holds.
The case $\alpha<0$ is considered similarly.

\begin{example}[Ornstein--Uhlenbeck or Vasicek model]
Let $Y$ be a solution of the SDE
\[
dY_t =a(b-Y_t)\,dt + \gamma\,dW_t^1,
\quad Y_0=y_0\in\R,
\]
where $a,b\in\R$, and $\gamma>0$ are some constants.
Assume that $\sigma_2^{-2}\in\lloc(\R)$ and one of the following
assumptions holds:
\begin{enumerate}[\rm(i)]
\item$a\ge0$,
\item$a<0$, $y^{-1}\sigma_2^{-2}(y)\notin\lloc(+\infty)\cup\lloc
(-\infty)$.
\end{enumerate}
Then estimator~\eqref{eq:ex1-est} is strongly consistent.
\end{example}
In this case, we also take $J=\R$. Then
\begin{align*}
\rho(y)&=\exp\set{-2\int_b^y\frac{a(b-u)}{\gamma^2}
\,du} =\exp\set{\frac{a}{\gamma^2}(y-b)^2},
\\
s(y)&=\int_b^y\exp\set{\frac{a}{\gamma^2}(u-b)^2}
\,du.
\end{align*}

If $a\ge0$, then
$\exp\{\frac{a}{\gamma^2}(u-b)^2\}\ge1$, and we get that
$s(+\infty)=\infty$, $s(-\infty)=-\infty$.

If $a<0$, then
\[
s(y)=\frac{\gamma}{\sqrt{-a}}\int_0^{\frac{\sqrt{-a}}{\gamma}(y-b)}
e^{-z^2}dz.
\]
Therefore,
$s(+\infty)=-s(-\infty)=\frac{\gamma\sqrt{\pi}}{2\sqrt{-a}}<\infty$,
and we need to verify (A\ref{A6})(iv).
Since $\int_x^\infty e^{-z^2}dz\sim\frac{1}{2x}e^{-x^2}$ as $x\to\infty
$, we see that
\[
\frac{s(+\infty)-s(y)}{\rho(y)\gamma^2\sigma_2^2(y)} =\frac{\frac{\gamma}{\sqrt{-a}}\int\limits_{\frac{\sqrt{-a}}{\gamma
}(y-b)}^\infty e^{-z^2}dz}{\exp\set{\frac{a}{\gamma^2}(y-b)^2}\gamma
^2\sigma_2^2(y)} \sim\frac{1}{-2a(y-b)\sigma_2^2(y)}
\]
as $y\to\infty$. Then $\frac{s(+\infty)-s(y)}{\rho(y)\gamma^2\sigma
_2^2(y)}\notin\lloc(+\infty)$
if $y^{-1}\sigma_2^{-2}(y)\notin\lloc(+\infty)$.
The condition
$\frac{s-s(-\infty)}{\rho\beta^2\sigma_2^2}\notin\lloc(-\infty)$
is considered similarly.\vadjust{\eject}

\begin{example}[Geometric Brownian motion]
Let $Y$ be a solution of the SDE
\[
dY_t =\alpha Y_t\,dt + \beta Y_t
\,dW_t^1, \quad Y_0=y_0>0,
\]
where $\alpha\,{\in}\,\R$ and $\beta\ne0$ are some constants.
Assume that $y^{-2}\sigma_2^{-2}(y)\,{\in}\,\lloc((0,+\infty))$ and one of
the following assumptions holds:
\begin{enumerate}[\rm(i)]
\item$\beta^2=2\alpha^2$,
\item$\beta^2<2\alpha^2$ and $y^{-1}\sigma_2^{-2}(y)\notin\lloc(+\infty)$,
\item$\beta^2>2\alpha^2$ and $y^{-1}\sigma_2^{-2}(y)\notin\lloc(0+)$.
\end{enumerate}
Then estimator~\eqref{eq:ex1-est} is strongly consistent.
\end{example}
In this case, the process $Y$ is positive, and hence $J=(0,\infty)$.
We have
\begin{align*}
\rho(y)&=\exp\set{-2\int_1^y\frac{\alpha^2}{\beta^2u}
\,du} =y^{-\frac{2\alpha^2}{\beta^2}},
\\
s(y)&=\int_1^yu^{-\frac{2\alpha^2}{\beta^2}}\,du=
\begin{cases}
\frac{y^{1-\frac{2\alpha^2}{\beta^2}}-1}{1-\frac{2\alpha^2}{\beta^2}},
& \beta^2\ne2\alpha^2,\\
\ln y,
& \beta^2=2\alpha^2.
\end{cases} %
\end{align*}
If $\beta^2=2\alpha^2$, then
$s(0)=-\infty$ and $s(+\infty)=\infty$.
If $\beta^2<2\alpha^2$, then
$s(0)=-\infty$,
$s(+\infty)<\infty$, and
\[
\frac{s(+\infty)-s(y)}{\rho(y)\beta^2y^2\sigma_2^2(y)} =\frac{1}{(2\alpha^2-\beta^2)y\sigma_2^2(y)} \notin\lloc(+\infty).
\]
If $\beta^2>2\alpha^2$, then
$s(0)>-\infty$,
$s(+\infty)=\infty$, and
\[
\frac{s(y)-s(0)}{\rho(y)\beta^2y^2\sigma_2^2(y)} =\frac{1}{(\beta^2-2\alpha^2)y\sigma_2^2(y)} \notin\lloc(0+).
\]

\begin{example}[Cox--Ingersoll--Ross model]
Let $Y$ be a solution of the SDE
\[
dY_t =a(b-Y_t)\,dt + \gamma\sqrt{Y_t}
\,dW_t^1, \quad Y_0=y_0\in\R,
\]
where $a$, $b$, $\gamma$ are positive constants, and $2ab\ge
\gamma^2$.
Assume that
\[
y^{-1}\sigma_2^{-2}(y)\in\lloc\bigl((0,+\infty)
\bigr).
\]
Then estimator~\eqref{eq:ex1-est} is strongly consistent.
\end{example}
Under the condition $2ab\ge\gamma^2$, the process $Y$ is positive, and
hence $J=(0,\infty)$. Further,
\begin{align*}
\rho(y)&=\exp\set{-2\int_1^y\frac{a(b-u)}{\gamma^2u}
\,du} %=\exp\set{-\frac{2a}{\gamma^2}(b\ln y-y+1)}
=y^{-\frac{2ab}{\gamma^2}}e^{\frac{2a}{\gamma^2}(y-1)},
\\
s(y)&=e^{-\frac{2a}{\gamma^2}}\int_1^yu^{-\frac{2ab}{\gamma^2}}e^{\frac
{2a}{\gamma^2}u}
\,du.
\end{align*}
Since $u^{-\frac{2ab}{\gamma^2}}e^{\frac{2a}{\gamma^2}u}\to\infty$ as
$u\to\infty$, we see that $s(+\infty)=\infty$.
Moreover, using the inequality $e^{\frac{2a}{\gamma^2}u}\ge1$, we get
\[
s(0)=-e^{-\frac{2a}{\gamma^2}}\int_0^1u^{-\frac{2ab}{\gamma^2}}e^{\frac
{2a}{\gamma^2}u}
\,du \le-e^{-\frac{2a}{\gamma^2}}\int_0^1u^{-\frac{2ab}{\gamma^2}}
\,du =-\infty
\]
since $\frac{2ab}{\gamma^2}>1$.
Thus, assumption~(A\ref{A6})(i) is satisfied.

\section{Simulations}
\label{sec:4}
We illustrate the quality of the estimator $\hat\theta_T$ in model
\eqref{eq:ex1-sde}--\eqref{eq:ex1-Y} by simulation experiments.
We simulate the trajectories of the Wiener processes $W$ and $W^1$ at
the points
$t = 0, h, 2h, 3h,\dots$
and compute the approximate values of the process $Y$ and $X$ as
solutions to SDEs using Euler's approximations.
For each set of parameters, we simulate 100 sample paths with step $h = 0.0001$.
The initial values of the processes are $x_0=y_0=1$, and the true value
of the parameter is $\theta=2$.
The results are reported in Table~\ref{tab:1}.

\begin{table}%[htb]
\tabcolsep=7.3pt
\caption{The means and standard deviations of $\hat\theta_T$}
%\centering
%\footnotesize
%
\begin{tabular}{*{3}{Q}l*{4}{Q}}\hline
&&&& T &&&\\
\cmidrule(l){5-8}
%\cline{5-8}
\alpha(y) & \beta(y) & \sigma_2(y)& & 10 & 50 & 100 & 200\\\hline   \addlinespace
1&1&\abs{y}^{1/4}&Mean&1.9455&1.9431&1.9711&1.9762\\
&&&Std.dev.&0.4260&0.2576&0.2367&0.2022\\
\addlinespace
y&2y&\sqrt{y}&Mean&2.0104&2.0000&2.0000&2.0000\\
&&&Std.dev.&0.1225&5.7\cdot10^{-5}&4.7\cdot10^{-8}&1.6\cdot10^{-14}\\
\addlinespace
y&y&(1+y)^{-1}&Mean&2.0008&2.0001&2.0000&2.0000\\
&&&Std.dev.&0.0769&0.0010&2.2\cdot10^{-12}&1.4\cdot10^{-14}\\
\addlinespace
y&1&2+\sin y&Mean&1.9358&1.9819&1.9927&1.9939\\
&&&Std.dev.&0.5436&0.2437&0.1679&0.1077\\
\addlinespace
-y&1&2+\sin y&Mean&1.9061&1.9684&1.9700&1.9786\\
&&&Std.dev.&0.5994&0.2472&0.1781&0.1254\\
\addlinespace
2-y&\sqrt{y}&\sqrt{y}&Mean&1.9923&2.0039&1.9796&1.9872\\
&&&Std.dev.&0.3540&0.1604&0.1173&0.0782\\
\addlinespace
2-y&\sqrt{y}&y&Mean&2.0830&1.9835&1.9803&1.9886\\
&&&Std.dev.&0.4347&0.1974&0.1205&0.0840\\
\hline
\end{tabular}
\label{tab:1}
\end{table}

%\appendix

\section{Appendix}\label{app}
The next two propositions are taken from \cite{Skorokhod65}.
\begin{prop} \label{prop}
Assume that we have $r$ sequences of stochastic processes
$\xi_n^{(1)}, \ldots, \xi_n^{(r)}$ such that, for all $i=1,\ldots,r$,
\begin{enumerate}[\rm(i)]
\item for every $\delta> 0, \lim_{h \rightarrow
0}\varlimsup_{n\rightarrow\infty}\sup_{\abs{t_1-t_2} \leq h} \P
(|\xi_n^{(i)}(t_1) -\xi_n^{(i)}(t_2)| > \delta )= 0$,
\item$\lim_{C\rightarrow\infty}\lim_{n\rightarrow\infty
}\sup_{0\leq t \leq T}\P (|\xi_n^{(i)}(t)| >C )=0$.
\end{enumerate}
Then, for some sequence $n_k$ we can construct processes
$X_{n_k}^{(1)},\ldots,X_{n_k}^{(r)}$ on the probability space
$(\varOmega', \F', P')$, where $\varOmega'=[0,1]$,
$\F'=\mathcal B ([0,1])$, and $P'$ is the Lebesgue measure, such
that the finite-dimensional distributions of
$X_{n_k}^{(1)},\ldots,X_{n_k}^{(r)}$ coincide with those of
$\xi_{n_k}^{(1)},\ldots,\xi_{n_k}^{(r)}$ and each of the sequences
$X_{n_k}^{(1)},\ldots,X_{n_k}^{(r)}$ converges in probability to
some limit.
\end{prop}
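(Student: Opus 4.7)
The plan is to follow the classical Skorokhod construction: establish joint tightness of the $r$-tuple of laws on an appropriate Polish function space, extract a weakly convergent subsequence, and then realize the limit on the canonical probability space $([0,1],\mathcal B([0,1]),\lambda)$ via Skorokhod's representation theorem.

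First, I would observe that hypotheses (i) and (ii) are precisely Skorokhod's tightness criteria for each family $\{\xi_n^{(i)}\}_n$ in the path space $E=C([0,T])$ (or $D([0,T])$, depending on the implicit sample-path regularity): condition (ii) controls the one-dimensional marginals at each fixed time, while condition (i) is the stochastic modulus-of-continuity estimate needed for tightness in the function space by an Arzel\`a--Ascoli / Prokhorov-type argument. Tightness of the coordinates on each factor yields tightness on the product $E^r$, so the joint laws $\mathcal L(\xi_n^{(1)},\ldots,\xi_n^{(r)})$ form a tight family on $E^r$, and Prokhorov's theorem furnishes a subsequence $n_k$ along which these joint laws converge weakly to some probability measure $\mu$ on $E^r$.

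Next, I would invoke the Skorokhod representation theorem in its canonical form on the Polish space $E^r$: since Lebesgue measure on $[0,1]$ supports any Borel probability measure on a Polish space, one can construct random elements $(X_{n_k}^{(1)},\ldots,X_{n_k}^{(r)})$ on $(\varOmega',\F',P')=([0,1],\mathcal B([0,1]),\lambda)$ whose joint finite-dimensional distributions coincide with those of $(\xi_{n_k}^{(1)},\ldots,\xi_{n_k}^{(r)})$, together with a limit element $(X^{(1)},\ldots,X^{(r)})$ of law $\mu$, in such a way that the whole $r$-tuple converges $P'$-almost surely in $E^r$. Coordinatewise this gives $X_{n_k}^{(i)}\to X^{(i)}$ a.s., hence in probability, which is exactly the claim.

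The main technical obstacle lies in the tightness step: the somewhat ad hoc hypotheses (i) and (ii) must be translated into genuine tightness in the chosen function space $E$. The appropriate choice of $E$ ($C$ versus $D$) depends on the implicit sample-path regularity of the $\xi_n^{(i)}$, but in either case the required criterion is classical (Skorokhod's monograph, or Billingsley's treatment of $D[0,T]$). Once tightness is in hand, the rest is routine: Prokhorov yields weak convergence along a subsequence, and the canonical Skorokhod representation on $[0,1]$ delivers almost sure convergence of each coordinate, which is in fact stronger than the convergence in probability that is claimed.
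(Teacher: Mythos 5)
The paper does not actually prove this proposition; it is quoted from Skorokhod's 1965 monograph, so your attempt has to be measured against the standard argument there. Your proposal has a genuine gap at its first and decisive step: conditions (i) and (ii) are \emph{not} tightness criteria in $C([0,T])$ or $D([0,T])$. Condition (i) bounds $\sup_{\abs{t_1-t_2}\le h}\P(|\xi_n^{(i)}(t_1)-\xi_n^{(i)}(t_2)|>\delta)$, a supremum of probabilities over pairs of times, whereas tightness in a function space requires a bound on $\P(\sup_{\abs{t_1-t_2}\le h}|\xi_n^{(i)}(t_1)-\xi_n^{(i)}(t_2)|>\delta)$, or on the corresponding c\`adl\`ag modulus --- the probability of a supremum. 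One cannot pass from the former to the latter. Concretely, take $U$ uniform on $[0,1]$ and $\xi_n(t)=\ind_{U\le t<U+1/n}$ on $[0,2]$: then $\sup_{\abs{t_1-t_2}\le h}\P(|\xi_n(t_1)-\xi_n(t_2)|>\delta)\le 2/n\to0$, so (i) holds and (ii) is trivial, yet the two unit jumps at distance $1/n$ destroy $J_1$-tightness in $D[0,2]$, and the paths are not continuous, so $C$ is unavailable. The proposition is nevertheless true for this sequence (each $\xi_n(t)\to0$ in probability), which shows that your route tries to prove something strictly stronger than the hypotheses permit. This is not a pathological worry here: in the present paper the proposition is applied to the piecewise-constant interpolations $\xi_n(t)=\xi_k^n$, $t\in[t_k^n,t_{k+1}^n)$, for which exactly this phenomenon occurs, and the conclusion claimed is only convergence in probability at each fixed $t$, not convergence of paths.

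The intended argument avoids function spaces altogether. Condition (ii) yields tightness of all finite-dimensional vectors $(\xi_n^{(i)}(t_j))_{i,j}$, so a diagonal extraction over a countable dense set $S\subset[0,T]$ produces a subsequence $n_k$ along which all finite-dimensional distributions indexed by $S$ converge; equivalently, the joint laws converge weakly on the Polish space $\R^{S}$ with the product topology. The Skorokhod representation theorem on $(\varOmega',\F',P')=([0,1],\mathcal B([0,1]),\mathrm{Leb})$ is then applied to these countably-indexed random elements, giving copies with the same finite-dimensional distributions that converge almost surely, hence in probability, at every $t\in S$. Condition (i) is what is used afterwards: it ensures that $X_{n_k}^{(i)}(t)$ is close in probability to $X_{n_k}^{(i)}(s)$ for $s\in S$ near $t$, uniformly in $k$, so the construction and the convergence in probability extend from $S$ to every $t\in[0,T]$. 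Your last two steps (Prokhorov plus Skorokhod representation) are sound in spirit, but they must be run on $\R^{S}$ rather than on $C^r$ or $D^r$, and condition (i) should be spent on the extension from the dense set, not on a function-space tightness claim it cannot deliver.
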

\begin{prop}
Let $\eta_n(t)$ be a sequence of martingales such that $\eta
_n(t)\rightarrow W(t)$ in probability for all $t$ and
$\E\eta_n(t)^2\rightarrow t$ as $n\rightarrow\infty$. Let $f_n(t)$ be
a sequence such that $\int_0^T f_n(t)\,d\eta_n(t)$ exists for all $n$,
$f_n(t)\rightarrow f(t)$ in probability for all $t$, and $\int_0^T
f(t)\,dW(t)$ exists.
Suppose that, in addition, the following conditions hold:
\begin{enumerate}[\rm(i)]
\item
for all $\varepsilon>0$, there exists $C >0$ such that, for all $n$,
\[
\P \Bigl(\sup_{0 \leq t \leq T}\big|f_n(t)\big| >C \Bigr)\leq
\varepsilon,
\]
\item
for all $\varepsilon>0$,
\[
\lim_{h\rightarrow0}\lim_{n\rightarrow\infty}\sup
_{\abs{t_1-t_2} \leq
h}\P\bigl(\big|f_n(t_2)-f_n(t_1)\big|
> \varepsilon\bigr)=0.
\]
\end{enumerate}
Then $\int_0^T f_n(t)\,d \eta_n(t)\rightarrow\int_0^T f(t)\,dW(t)$ in
probability.
\end{prop}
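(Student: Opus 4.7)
The plan is to reduce convergence of the stochastic integrals to convergence of finite Riemann--Stieltjes-type sums, by approximating $f_n$ and $f$ with step functions on a common partition. Fix $0=t_0<t_1<\cdots<t_N=T$ with mesh $h=\max_k(t_{k+1}-t_k)$ and put $f_n^h(t)=f_n(t_k)$, $f^h(t)=f(t_k)$ for $t\in[t_k,t_{k+1})$. I would then decompose
\[
\int_0^T f_n\,d\eta_n-\int_0^T f\,dW=I_1^{n,h}+I_2^{n,h}+I_3^{n,h}+I_4^{h},
\]
where $I_1^{n,h}=\int_0^T(f_n-f_n^h)\,d\eta_n$, $I_2^{n,h}=\int_0^T(f_n^h-f^h)\,d\eta_n$, $I_3^{n,h}=\int_0^T f^h\,d\eta_n-\int_0^T f^h\,dW$, and $I_4^{h}=\int_0^T(f^h-f)\,dW$. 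The strategy is to pick $h$ small first so that $I_1^{n,h}$ and $I_4^{h}$ are small in probability uniformly in $n$, and then to let $n\to\infty$ so that $I_2^{n,h}$ and $I_3^{n,h}$ vanish for this fixed $h$.

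The terms $I_2^{n,h}$ and $I_3^{n,h}$ are finite sums over the nodes $t_k$. For $I_3^{n,h}=\sum_k f(t_k)\bigl[(\eta_n(t_{k+1})-\eta_n(t_k))-(W(t_{k+1})-W(t_k))\bigr]$, the pointwise hypothesis $\eta_n(t)\to W(t)$ in probability makes each bracket vanish, and Slutsky plus summation over finitely many $k$ give $I_3^{n,h}\to 0$ in probability as $n\to\infty$. For $I_2^{n,h}=\sum_k(f_n(t_k)-f(t_k))(\eta_n(t_{k+1})-\eta_n(t_k))$, the uniform $L^2$-bound on martingale increments supplied by $\E\eta_n(T)^2\to T$, combined with $f_n(t_k)\to f(t_k)$ in probability and the truncation from condition (i), yields termwise convergence in probability via Chebyshev.

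The main obstacle is $I_1^{n,h}$. I would truncate: given $\varepsilon>0$, condition (i) supplies $C$ with $\P(\sup_t|f_n(t)|>C)\le\varepsilon$ for every $n$, so on the complement we may replace $f_n$ by the bounded process $f_n^C=(-C)\vee f_n\wedge C$. Since $\eta_n$ is a square-integrable martingale with $\E\langle\eta_n\rangle_T=\E\eta_n(T)^2\to T$, the It\^o $L^2$-isometry gives, for the resulting truncated version $I_1^{n,h,C}$,
\begin{align*}
\E\bigl(I_1^{n,h,C}\bigr)^2 &= \E\int_0^T\bigl(f_n^C(t)-(f_n^C)^h(t)\bigr)^2\,d\langle\eta_n\rangle_t\\
&\le \delta^2\,\E\langle\eta_n\rangle_T+4C^2\,\E\int_0^T\mathbf{1}_{\{|f_n^C(t)-(f_n^C)^h(t)|>\delta\}}\,d\langle\eta_n\rangle_t.
\end{align*}
The first summand is $O(\delta^2)$ uniformly in $n$; and condition (ii), applied to $|f_n(t)-f_n(t_k)|$ on each interval $[t_k,t_{k+1})$ and combined with the uniform boundedness of $\E\langle\eta_n\rangle_T$, forces the second summand to $0$ as $h\to0$ uniformly in $n$. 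Chebyshev then yields $I_1^{n,h}\to 0$ in probability as $h\to 0$, uniformly in $n$. The term $I_4^{h}$ is handled identically: $f$ inherits local boundedness from (i) and a modulus property in probability from (ii), and $\int_0^T(f^h-f)\,dW$ is made small by the standard step-function approximation of the It\^o integral.

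The hardest point is precisely this uniform-in-$n$ control of $I_1^{n,h}$: the hypotheses reveal nothing about $\langle\eta_n\rangle$ beyond $\E\eta_n(T)^2\to T$, so the mesh-based probability condition (ii) together with the truncation from (i) must be converted, through the martingale isometry, into a quantitative $L^2$-estimate on the stochastic integral that is simultaneously small in $h$ and uniform across the entire approximating sequence $\eta_n$.
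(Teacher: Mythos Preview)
The paper does not prove this proposition; it is quoted from Skorokhod's monograph \cite{Skorokhod65} without argument, so there is no in-paper proof to compare against. Your four-term step-function decomposition is indeed the classical route that Skorokhod uses.

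That said, your treatment of $I_1^{n,h}$ contains a genuine gap. After truncation you arrive at
\[
\E\bigl(I_1^{n,h,C}\bigr)^2 \le \delta^2\,\E\langle\eta_n\rangle_T+4C^2\,\E\int_0^T\mathbf{1}_{\{|f_n^C(t)-(f_n^C)^h(t)|>\delta\}}\,d\langle\eta_n\rangle_t,
\]
and you assert that condition~(ii) together with the bound on $\E\langle\eta_n\rangle_T$ forces the second summand to vanish as $h\to0$ uniformly in $n$. This does not follow. Condition~(ii) controls only the \emph{probability} $\P(|f_n(t)-f_n(t_k)|>\delta)$ for each fixed pair $t,t_k$; the expectation you need to bound integrates the random indicator against the random measure $d\langle\eta_n\rangle_t$, and nothing in the hypotheses decouples the two. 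The assumption $\E\eta_n(t)^2\to t$ pins down the mean of $\langle\eta_n\rangle_t$ but says nothing about its joint law with $f_n$, so the measure could concentrate precisely where the indicator is $1$. If $\langle\eta_n\rangle_t=t$ (as happens when the $\eta_n$ are themselves Wiener processes, which is the case in Skorokhod's actual application and in this paper's use of the proposition), then Fubini reduces the second summand to $4C^2\int_0^T\P(|f_n^C(t)-(f_n^C)^h(t)|>\delta)\,dt$ and your argument goes through cleanly. For general square-integrable martingales $\eta_n$ you need an additional device---either a Lenglart-type domination to pass from quadratic variation to a deterministic bound, or an argument that $\langle\eta_n\rangle_t\to t$ in a mode strong enough to control the integral---and you have not supplied one. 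Your closing paragraph correctly identifies this as the crux, but the sentence claiming the conversion ``forces the second summand to $0$'' is precisely where the argument is incomplete.
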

In the next two lemmas, we modify the corresponding auxiliary results
from \cite{Skorokhod65} to Eq.~\eqref{eq:intro} with multiplicative
diffusion. Consider a sequence of partitions $0=t_0^n <t_1^n
<\cdots<t_n^n=T$ of $[0,T]$ such that $\lim_{n\rightarrow\infty} \max_k (t_{k+1}^n-t_k^n )=0$.
Define $\xi_k^n$ by $\xi_0^n=X(0)$ and
\[
\xi_{k+1}^n=\xi_k^n+a
\bigl(t_k^n,\xi_k^n\bigr)\Delta
t_k^n+\sigma_1\bigl(t_k^n,
\xi_k^n\bigr) \sigma_2\bigl(t_k^n,Y
\bigl(t_k^n\bigr)\bigr)\Delta W_k^n.
\]
\begin{lemma}\label{lem1}
The random variables $\sup_k|\xi_k^n|$ are bounded in
probability uniformly w.r.t.\ $n$.
\end{lemma}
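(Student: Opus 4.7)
The plan is to extend the discrete Euler scheme to a continuous-time process, derive a uniform $L^2$ bound via Burkholder--Davis--Gundy and Gronwall, and finally convert this into tightness via Markov's inequality.

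First, I would introduce the piecewise-constant time projection $\tau^n(s)=t_k^n$ for $s\in[t_k^n,t_{k+1}^n)$ and the continuous-time interpolation
$$\xi^n(t)=X(0)+\int_0^t a\bigl(\tau^n(s),\xi^n\bigl(\tau^n(s)\bigr)\bigr)\,ds+\int_0^t \sigma_1\bigl(\tau^n(s),\xi^n\bigl(\tau^n(s)\bigr)\bigr)\sigma_2\bigl(\tau^n(s),Y\bigl(\tau^n(s)\bigr)\bigr)\,dW(s).$$
At mesh points, $\xi^n(t_k^n)=\xi_k^n$, so $\sup_k\abs{\xi_k^n}\le\sup_{t\in[0,T]}\abs{\xi^n(t)}$ and it suffices to bound the right-hand side in probability uniformly in $n$.

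Second, to handle the possibility that $\E\sup_{t\le T}\abs{\xi^n(t)}^2$ is not a priori finite, I would introduce the stopping times $\tau_N^n=\inf\set{t\ge0:\abs{\xi^n(t)}\ge N}\wedge T$ and work with the stopped process $\xi^n(t\wedge\tau_N^n)$. Writing $M$ for a uniform bound of $\abs{\sigma_2}$, applying the Burkholder--Davis--Gundy inequality to the martingale part and Cauchy--Schwarz to the drift part, and using $\abs{\sigma_1(s,x)}^2+\abs{a(s,x)}^2\le K(1+\abs{x}^2)$, I would obtain
$$\E\sup_{u\le t}\big|\xi^n\bigl(u\wedge\tau_N^n\bigr)\big|^2\le C_0+C_1\int_0^t\Bigl(1+\E\sup_{v\le s}\big|\xi^n\bigl(v\wedge\tau_N^n\bigr)\big|^2\Bigr)\,ds,$$
with constants $C_0,C_1$ depending only on $K$, $M$, $T$, and $X(0)$, but not on $n$ or $N$ (here the integrand dominates $\abs{\xi^n(\tau^n(s)\wedge\tau_N^n)}^2$ by $\sup_{v\le s}\abs{\xi^n(v\wedge\tau_N^n)}^2$).

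Third, I would apply Gronwall's lemma to conclude $\E\sup_{u\le T}\abs{\xi^n(u\wedge\tau_N^n)}^2\le C(T)$ with $C(T)$ independent of $n$ and $N$, and let $N\to\infty$ via monotone convergence to get $\E\sup_{t\le T}\abs{\xi^n(t)}^2\le C(T)$. Markov's inequality then yields
$$\P\Bigl(\sup_k\abs{\xi_k^n}>C\Bigr)\le\P\Bigl(\sup_{t\le T}\abs{\xi^n(t)}>C\Bigr)\le\frac{C(T)}{C^2}\xrightarrow[C\to\infty]{}0$$
uniformly in $n$, which is exactly the required boundedness in probability.

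The only delicate point is the localization argument in the second step, because without it the BDG bound is circular; once the stopping times $\tau_N^n$ are in place, everything reduces to standard moment estimates for the Euler scheme, the boundedness of $\sigma_2$ being the crucial input that removes any dependence on the auxiliary process $Y$.
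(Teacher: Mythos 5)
Your proof is correct, and it follows the same overall strategy as the paper's --- localize so that second moments are finite, get a uniform $L^2$ bound on the running supremum via a Gronwall argument plus a martingale maximal inequality, and finish with Chebyshev --- but the technical execution is genuinely different. The paper never leaves the discrete recursion: it localizes by truncating the \emph{coefficients}, setting $a^N(t,x)=a(t,x)\ind_{\abs{x}\le N}$ and $\sigma_1^N(t,x)=\sigma_1(t,x)\ind_{\abs{x}\le N}$, so that the truncated scheme $\eta_k^n$ freezes once it leaves $[-N,N]$ and is square-integrable by construction; it then runs a discrete Gronwall estimate of the form $\E\abs{\eta_{k+1}^n}^2+1\le(\E\abs{\eta_k^n}^2+1)e^{H\Delta t_k^n}$ and Doob's inequality for the discrete martingale part, and concludes by observing that on the event $\set{\sup_k\abs{\eta_k^n}<N}$ the truncated and untruncated schemes coincide. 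You instead pass to the continuous-time interpolation and localize with the stopping times $\tau_N^n$; this is sound because the interpolation is a continuous It\^{o} process with piecewise-constant integrands, so the stopped process is bounded by $N$ and there is no circularity, and BDG together with the integral form of Gronwall then gives a bound independent of $n$ and $N$. Your route yields the slightly stronger conclusion $\E\sup_{t\le T}\abs{\xi^n(t)}^2\le C(T)$ for the untruncated scheme, at the price of invoking continuous-time stochastic calculus already at this stage (the paper only introduces the interpolated process in Lemma~2); the paper's coefficient-truncation device is more elementary and, via the truncation $\eta_0^n=\xi_0^n\,\ind_{\abs{\xi_0^n}\le N}$ of the initial value, would also accommodate a random initial condition without moment assumptions, whereas your constant $C_0$ uses $\E\abs{X(0)}^2<\infty$ (harmless here, since $X_0\in\R$ is deterministic).
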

\begin{proof}
Let $\eta_0^n=\xi_0^n\; \ind_{\abs{\xi_0^n} \leq N}$ and
\begin{equation}
\label{eq:2.} \eta_{k+1}^n=\eta_k^n+a^N
\bigl(t_k^n,\eta_k^n\bigr)\Delta
t_k^n+\sigma^N_1
\bigl(t_k^n,\eta_k^n\bigr)
\sigma_2\bigl(t_k^n,Y\bigl(t_k^n
\bigr)\bigr)\Delta W_k^n,
\end{equation}
where $a^N(t,x)=a(t,x)\ind_{\abs{x} \leq N}$,
$\sigma^N_1(t,x)=\sigma_1(t,x)\ind_{\abs{x} \leq N}$.
If
$|\eta_k^n| > N$, then $\eta_{k+1}^n=\eta_k^n$, and if
$|\eta_k^n|\leq N$, then
\[
\abs{\eta_{k+1}^n} \leq N + \big|a^N
\bigl(t_k^n,\eta_k^n\bigr)\big|
\Delta t_k^n+\big|\sigma^N_1
\bigl(t_k^n,\eta_k^n\bigr)
\sigma_2\bigl(t_k^n,Y\bigl(t_k^n
\bigr)\bigr)\big|\abs{\Delta W_k^n}.
\]
Then, for
any $ 1 \leq k \leq n$,
\[
\abs{\eta_k^n}\leq N + \sum
_{r=0}^{k-1}\big|a^N\bigl(t_r^n,
\eta_r^n\bigr)\big| \Delta t_r^n+
\sum_{r=0}^{k-1} \big|\sigma^N_1
\bigl(t_r^n,\eta_r^n\bigr)
\sigma_2\bigl(t_r^n,Y\bigl(t_r^n
\bigr)\bigr)\big|\abs{\Delta W_r^n}
\]
and is square-integrable.
Furthermore,
\begin{align*}
\E\abs{\eta_{k+1}^n}^2&=\E\abs{
\eta_k^n}^2+2 \E \bigl(a^N
\bigl(t_k^n,\eta _k^n\bigr)
\eta_k^n \Delta t_k^n \bigr) +\E
a^N\bigl(t_k^n,\eta_k^n
\bigr)^2 \bigl(\Delta t_k^n
\bigr)^2
\\
&\quad+\E \bigl( \bigl(\sigma^N_1\bigl(t_k^n,
\eta_k^n\bigr) \bigr)^2 \bigl(
\sigma_2\bigl(t_k^n,Y\bigl(t_k^n
\bigr)\bigr)\bigr)^2 \bigl(\Delta W_k^n
\bigr)^2 \bigr)
\\
&\leq\E\abs{\eta_k^n}^2+2 \E
\bigl(a^N\bigl(t_k^n,\eta_k^n
\bigr)\eta_k^n \Delta t_k^n
\bigr)+\E a^N\bigl(t_k^n,
\eta_k^n\bigr)^2 \bigl(\Delta
t_k^n\bigr)^2
\\
&\quad+C^2 \E \bigl( \bigl(\sigma^N_1
\bigl(t_k^n,\eta_k^n \bigr)
\bigr)^2 \Delta t_k^n \bigr).
\end{align*}
Then there exists a constant $H=H(T,K)$ such that
\begin{align*}
\E\abs{\eta_{k+1}^n}^2&\leq\E\abs{
\eta_k^n}^2\bigl(1+H\Delta
t_k^n\bigr)+H\Delta t_k^n \leq
\E\abs{\eta_k^n}^2 e^{H\Delta t_k^n}+H\Delta
t_k^n,
\\
\E\abs{\eta_{k+1}^n}^2+1&\leq \bigl(\E\abs{
\eta_k^n}^2+1 \bigr) e^{H\Delta t_k^n}\leq
\bigl(\E\abs{\eta_0^n}^2+1 \bigr)
e^{HT},
\\
\E\abs{\eta_{k+1}^n}^2&\leq \bigl(\E\abs{
\eta_0^n}^2+1 \bigr) e^{HT}-1.
\end{align*}
We have that
\begin{align*}
\sup_k\abs{\eta_k^n} \leq\abs{
\eta_0^n } + \sum_{j=0}^{n-1}
\big|a^N \bigl(t_j^n,\eta_j^n
\bigr)\big| \Delta t_j^n
+\sup_k\abs{\sum_{j=0}^{k-1}
\sigma^N_1 \bigl(t_j^n,
\eta_j^n \bigr) \sigma_2
\bigl(t_j^n,Y \bigl(t_j^n \bigr)
\bigr) \Delta W_j^n}.
\end{align*}
So, since
\begin{align*}
&\E\sup_{0 \leq r \leq n}\abs{\sum_{k=0}^{r}
\sigma^N_1\bigl(t_k^n,
\eta_k^n\bigr) \sigma_2\bigl(t_k^n,Y
\bigl(t_k^n\bigr)\bigr) \Delta W_k^n}^2
\\
&\quad\leq4 \sum_{k=0}^{n} \E\big|
\sigma^N_1\bigl(t_k^n,
\eta_k^n\bigr) \sigma_2\bigl(t_k^n,Y
\bigl(t_k^n\bigr)\bigr)\big|^2 \Delta
t_k^n \leq4 K \sum_{k=0}^{n}
\bigl(\E\abs{\eta_k^n}^2+1\bigr)\Delta
t_k^n
\end{align*}
and
\[
\E \Biggl( \sum_{k=0}^{n-1}
\big|a^N\bigl(t_k^n,\eta_k^n
\bigr)\big| \Delta t_k^n \Biggr)^2\leq KT\sum
_{k=0}^{n-1}\bigl(\E\abs{\eta_k^n}^2+1
\bigr)\Delta t_k^n,
\]
we get
\[
\E\sup_k \big|\eta_k^n
\big|^2 \leq A+B\big| \eta_0^n\big|^2.
\]
We have that $ \eta_0^n$ is bounded uniformly w.r.t.\ $n$ and $N$. Then
$\sup_k|\eta_k^n|^2$ is bounded in $L_2$ uniformly w.r.t.\ $n$, $N$.

For $\sup_k |\eta_k^n|< N$, we have $\sup_k|\eta_k^n|=\sup_k
|\xi_k^n|$.
Hence, $\sup_k|\xi_k^n|$ is bounded in probability uniformly
w.r.t.\ $n$.
\end{proof}
\begin{remark}\label{r1}
Using Lemma \ref{lem1}, we have that, for all $\varepsilon>0$, there
exists $N>0$ such that,
for every $n \ge N, \; \P(\sup_k |\xi_k^n -\eta_k^n|
>0)<\varepsilon$.
\end{remark}
\begin{lemma}\label{lem 2}
Let $\xi_n(t)=\xi_k^n$ for $t\in[t_k^n,t_{k+1}^n)$. Then, for all
$\delta>0$,
\[
\lim_{h\rightarrow0}\varlimsup_{n\rightarrow\infty}\sup_{\abs
{t_1-t_2}\leq h}
\P\bigl(\big|\xi_n(t_1) -\xi_n(t_2)\big|
>\delta\bigr)=0.
\]
\end{lemma}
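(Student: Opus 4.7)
The plan is to reduce the problem to the truncated Euler scheme $\eta_k^n$ from Lemma~\ref{lem1} and then estimate the modulus of continuity of $\eta_n$ by splitting the increment into a drift part and a martingale part. Fix $\delta>0$ and $\varepsilon>0$. By Lemma~\ref{lem1} and Remark~\ref{r1}, I choose a truncation level $N$ large enough that $\P(\sup_k|\xi_k^n-\eta_k^n|>0)<\varepsilon$ for all sufficiently large $n$. Setting $\eta_n(t)=\eta_k^n$ on $[t_k^n,t_{k+1}^n)$, on the event $\{\xi_k^n=\eta_k^n\text{ for all }k\}$ the step processes coincide, so it suffices to control $\P(|\eta_n(t_1)-\eta_n(t_2)|>\delta)$ uniformly in $|t_1-t_2|\le h$.

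For $t_1<t_2$ lying in cells $[t_{k_1}^n,t_{k_1+1}^n)$ and $[t_{k_2}^n,t_{k_2+1}^n)$, I decompose $\eta_n(t_2)-\eta_n(t_1)=A_n+M_n$ with
\[
A_n=\sum_{k=k_1}^{k_2-1} a^N\bigl(t_k^n,\eta_k^n\bigr)\Delta t_k^n,\qquad M_n=\sum_{k=k_1}^{k_2-1} \sigma_1^N\bigl(t_k^n,\eta_k^n\bigr)\sigma_2\bigl(t_k^n,Y\bigl(t_k^n\bigr)\bigr)\Delta W_k^n.
\]
The definitions of $a^N$ and $\sigma_1^N$ combined with the linear growth assumption yield the deterministic pointwise bounds $|a^N|,|\sigma_1^N|\le\sqrt{K(1+N^2)}$, while $\sigma_2$ is bounded by some constant $C$ by hypothesis. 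Setting $\Delta_n=\max_k\Delta t_k^n$ and noting $t_{k_2}^n-t_{k_1}^n\le h+\Delta_n$, I obtain the pathwise estimate $|A_n|\le\sqrt{K(1+N^2)}(h+\Delta_n)$.

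For the martingale part, since each $\Delta W_k^n$ is independent of $\F_{t_k^n}$ and the integrands are $\F_{t_k^n}$-measurable, the summands are orthogonal in $L^2$, which gives
\[
\E M_n^2\le K\bigl(1+N^2\bigr)C^2(h+\Delta_n).
\]
Chebyshev's inequality then yields $\P(|M_n|>\delta/2)\le 4K(1+N^2)C^2(h+\Delta_n)/\delta^2$, and $A_n$ contributes nothing once $\sqrt{K(1+N^2)}(h+\Delta_n)\le\delta/2$. Taking first $\varlimsup_{n\to\infty}$ (so $\Delta_n\to0$) and then $h\to0$, the contribution from $\eta_n$ vanishes. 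Combined with the truncation probability $\varepsilon$, letting $\varepsilon\to0$ concludes the argument.

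The main obstacle is that $a$ and $\sigma_1$ satisfy only linear growth rather than a uniform bound, so the naive $L^2$ estimate for $M_n$ and pathwise estimate for $A_n$ cannot be made uniform in $n$ directly. This is precisely what the truncation apparatus of Lemma~\ref{lem1} circumvents: it uniformly controls $\sup_k|\xi_k^n|$ in probability and allows replacing the coefficients with bounded surrogates $a^N,\sigma_1^N$ at the cost of an arbitrarily small exceptional probability, after which the remaining estimates are a routine $L^2$ martingale bound plus a Riemann-sum bound on the drift.
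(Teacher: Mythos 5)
Your proof is correct and follows essentially the same route as the paper: reduce to the truncated scheme $\eta_k^n$ via Lemma~\ref{lem1} and Remark~\ref{r1}, then use the boundedness of $a^N$, $\sigma_1^N$, and $\sigma_2$ to control the increments of $\eta_n$. The only difference is that you spell out (via the drift/martingale decomposition and Chebyshev) the step that the paper merely asserts "from \eqref{eq:2.} and the boundedness of $a^N$, $\sigma_1^N$, and $\sigma_2$".
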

\begin{proof}
Let $\eta_n(t)=\eta_k^n$, $t\in[t_k^n,t_{k+1}^n)$. Then
\begin{align*}
\sup_{\abs{t_1-t_2}\leq h} \P\bigl(\big|\xi_n(t_1) -
\xi_n(t_2) \big| >\delta \bigr)&\leq\sup_{\abs{ t_1-t_2 } \leq h}
\P\bigl(\big| \eta_n(t_1) -\eta_n(t_2)
\big| >\delta\bigr)
\\
&\quad+ \P\Bigl(\sup_k \big| \xi_k^n -
\eta_k^n \big| >0\Bigr).
\end{align*}
From (\ref{eq:2.}) and the boundedness of $a^N$, $\sigma_1^N$, and
$\sigma_2$ we have that
\[
\lim_{h\rightarrow0}\varlimsup_{n\rightarrow\infty}\sup_{\abs
{t_1-t_2} \leq h}
\P\bigl(\big|\eta_n(t_1) -\eta_n(t_2)\big|
>\delta\bigr)=0.
\]
Therefore,
\[
\lim_{h\rightarrow0}\varlimsup_{n\rightarrow\infty}\sup_{\abs
{t_1-t_2} \leq h}
\P\bigl(\big| \xi_n(t_1) -\xi_n(t_2)
\big| >\delta\bigr) \leq \varlimsup_{n\rightarrow\infty}\P\Bigl(\sup_k
\big| \xi_k^n -\eta_k^n \big| >0
\Bigr).
\]
The proof follows now from Remark \ref{r1}.
\end{proof}

%% Acknowledgements %%
%%%%%%%%%%%%%%%%%%%%%%
\section*{Acknowledgment}
The authors are grateful to the anonymous referee for his useful
remarks and suggestions, which contributed to a substantial improvement
of the text.

\end{document}